\documentclass[11pt]{amsart}
\usepackage{
  geometry,
  amsrefs,
  amsthm,
  mathtools,
  amssymb,
  graphicx,
  epstopdf,
  color,
  cancel,
  multirow,
  arydshln
}
\usepackage[colorlinks=true,linkcolor=blue,urlcolor=cyan,citecolor=blue]{hyperref}
\mathtoolsset{showonlyrefs}

\newtheorem{theorem}{Theorem}
\newtheorem*{theorem*}{Theorem} % For duplicated theorem statements in introduction
\newtheorem{prop}{Proposition}
\newtheorem{lem}{Lemma}
\newtheorem{cor}{Corollary}
\newtheorem{remark}{Remark}

\renewcommand{\Re}{\operatorname{Re}}
\renewcommand{\Im}{\operatorname{Im}}
\newcommand{\tr}{\operatorname{tr}}
\newcommand{\I}{\operatorname{i}}
\newcommand{\e}{\operatorname{e}}
\newcommand{\id}{\operatorname{I}}

\newcommand{\hBlock}{H}
\newcommand{\reeb}{\mathcal{Z}}

\title{Canonical Bochner--K\"{a}hler potentials via the  Sasaki--K\"{a}hler correspondence}
\subjclass[2020]{32Q15, 53B35, 32V05}
\keywords{Bochner--K\"{a}hler manifolds, Sasakian manifolds, normal forms}

\makeatletter
\def\@tocline#1#2#3#4#5#6#7{\relax
  \ifnum #1>\c@tocdepth % then omit
  \else
    \par \addpenalty\@secpenalty\addvspace{#2}%
    \begingroup \hyphenpenalty\@M
    \@ifempty{#4}{%
      \@tempdima\csname r@tocindent\number#1\endcsname\relax
    }{%
      \@tempdima#4\relax
    }%
    \parindent\z@ \leftskip#3\relax \advance\leftskip\@tempdima\relax
    \rightskip\@pnumwidth plus4em \parfillskip-\@pnumwidth
    #5\leavevmode\hskip-\@tempdima
      \ifcase #1
       \or\or \hskip 1em \or \hskip 2em \else \hskip 3em \fi%
      #6\nobreak\relax
    \dotfill\hbox to\@pnumwidth{\@tocpagenum{#7}}\par
    \nobreak
    \endgroup
  \fi}
\makeatother

\begin{document}

%%%%%%%%%%%%%%%%%%%%%%%%% more metadata
\author[Martin Kol\'{a}\v{r}]{Martin Kol\'{a}\v{r}}
\address{Martin Kol\'{a}\v{r},
	Department of Mathematics and Statistics 
	Masaryk University,
	Kotl\'{a}\v{r}sk\'{a}~2,
	611 37 Brno,
	Czech Republic. ORCID ID: 0000-0003-4514-5636}\email{ mkolar@math.muni.cz}
\author[Gerd Schmalz]{Gerd Schmalz}
\address{Gerd Schmalz,
School of Science and Technology, University of New England, Armidale, 2351, Australia. ORCID ID: 0000-0002-6141-9329}
    \email{schmalz@une.edu.au}
 \author[David Sykes]{David Sykes}
\address{David Sykes,
	Institute for Basic Science, Center for Complex Geometry, Daejeon, 34126, Republic of Korea. ORCID ID: 0000-0002-6928-3753}\email{ sykes@ibs.re.kr}

\begin{abstract}
    We introduce a new approach to the study of Bochner--K\"{a}hler manifolds, based on normal form techniques in CR geometry. As observed by Webster, Bochner--K\"{a}hler manifolds are intimately related to spherical Sasakian manifolds. Our approach utilises this relationship as well as normal forms for spherical Sasakian manifolds (known as rigid spheres). In this setting potentials of Bochner--K\"{a}hler manifolds are nothing but the defining equations of rigid spheres in rigid normal form.

This leads to a canonical formula for Bochner--K\"{a}hler potentials, as well as streamlined proofs of known results on the structure and the symmetries of Bochner--K\"{a}hler manifolds. Our approach also provides a host of examples in terms of an implicit formula of the corresponding K\"{a}hler potentials.
\end{abstract}
\subjclass[2020]{Primary 32Q15; Secondary 32V05 53B35}

%%%%%%%%%%%%%%%%%%%%%%%%% body
\maketitle
\tableofcontents

\section{Introduction}
 Bochner--K\"{a}hler manifolds appear as a natural complex analog to conformally flat manifolds in Riemannian Geometry. 
In analogy to the splitting of the Riemann curvature into the Ricci curvature and the traceless Weyl curvature on Riemannian manifolds, the Riemann curvature on a K\"{a}hler manifold splits into the Ricci part and the traceless Bochner curvature. Bochner--K\"{a}hler manifolds are K\"{a}hler manifolds with vanishing Bochner curvature. Such manifolds have been studied by many authors, with  foundational theory by Bryant developed in \cite{MR1824987}. Among other results, Bryant described the moduli space of Bochner--K\"{a}hler structures, provided a series of examples, and determined possible symmetry algebra dimensions, proving the presence of a large symmetry algebra in particular. Bryant's approach features techniques of Cartan applied to analysing structure equations of canonical differential forms on the K\"{a}hler manifold's unitary coframe bundle.

Webster \cite{MR0520599} had earlier discovered a correspondence between Bochner--K\"{a}hler manifolds and CR-flat Sasakian structures, providing another viewpoint through which to study the K\"{a}hler geometry. This approach has been applied by David and Gauduchon \cite{MR2237108}, Pan\'ak and Schwachh\"{o}fer\cite{MR2462805}, Bolsinov and Rosemann \cite{MR4258120}, and others.  In the sequel, we apply the Webster correspondence from a CR geometric point of view to derive both new results and streamlined proofs of known theorems on Bochner--K\"{a}hler geometries, and wish to emphasise the relative simplicity in arguments that this viewpoint facilitates.

We apply a fundamentally different,  analytic approach based on normal form techniques, originating in the work of Chern and Moser. The main point is to establish the equivalence  of describing completely all K\"{a}hler  potentials for  Bochner--K\"{a}hler structures and a purely CR geometric problem, describing so called rigid spheres. Stanton in \cite{St91} posed this latter question and gave a list of rigid spheres, without proving completeness. Later, Ezhov, Schmalz and Kol\'a\v{r} in \cite{ES15,MR3833790} showed that Stanton's list is not complete and suggested a way to complete it, which provides the starting point for our approach.

In particular, using this approach, we derive the following succinct general description of the K\"{a}hler potentials defining Bochner--K\"{a}hler structures.
\newtheorem*{thmA}{\bf Theorem \ref{thm: general potentials}}
\begin{thmA}
    A real-valued function $F$ on a complex manifold $M$ is a K\"{a}hler potential of a Bochner--K\"{a}hler structure in a neighbourhood of a point $p\in M$ if and only if
    \begin{equation}\label{eqn: potential intro}
        \begin{bmatrix} 0& z^* & \frac{\I}{2} \end{bmatrix} \e^{2\I AF}\begin{bmatrix} 1\\ z\\ 0 \end{bmatrix}=0
    \end{equation}
    in some coordinates $z=(z_1,\ldots,z_n)$ centered at $p$ for some matrix $A\in \mathfrak{gl}_{n+2}(\mathbb{C})$ of the form 
    \begin{equation}\label{eqn: potential parameters intro}   
    A=\begin{bmatrix}
        0& -2\I a^* & r\\0&\I X& a\\1&0&0
    \end{bmatrix},
    \end{equation}
    where 
    \begin{itemize}
        \item $X\in \mathfrak{gl}_{n}(\mathbb{R})$ is diagonal with entries ordered such that $X_{j,j}\geq X_{j+1,j+1}$,
        \item $a$ is a vector with non-negative real entries satisfying $a_{j+1}=0$ whenever  $X_{j,j}=X_{j+1,j+1}$, and
        \item $r$ is a real number.
    \end{itemize}

    Moreover, such a matrix $A$ is uniquely determined by the germ at $p$ of the Bochner--K\"{a}hler structure being represented. 
\end{thmA}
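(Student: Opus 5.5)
We will prove the theorem by running it through the Webster correspondence, so that it becomes a statement about rigid spheres in rigid normal form.

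\emph{Step 1 (reduction to CR geometry).} Given a real function $F$ near $p$, consider the rigid hypersurface $M_F=\{\Im w = F(z,\bar z)\}\subset\mathbb{C}^{n+1}$ with transversal infinitesimal symmetry $\reeb=\partial_{\Re w}$. Webster's observation gives a criterion. The form $\I\partial\bar\partial F$ is a Bochner--K\"ahler metric exactly when the Sasakian manifold $M_F$, Levi-nondegenerate with definite Levi form, is spherical, i.e.\ CR-flat. Hence the first claim is equivalent to describing all rigid spheres. The pluriharmonic ambiguity $F\mapsto F+\Re h(z)$, together with holomorphic coordinate changes in $z$, corresponds exactly to the rigid transformations $(z,w)\mapsto(\phi(z),w+h(z))$. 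These preserve $\reeb$, and we may use them to impose the rigid normal form: $F=\langle z,z\rangle+O(3)$ with no pluriharmonic terms.

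\emph{Step 2 (spheres with a symmetry).} Since $M_F$ is spherical, it is locally CR-equivalent to the Heisenberg quadric $Q=\{\Im w=\langle z,z\rangle\}$, or to the sphere in projective coordinates. Under this equivalence $\reeb$ becomes an infinitesimal automorphism of $Q$, namely an element $A$ of $\mathfrak{su}(n+1,1)$ acting on the projective model, transversal at the base point. A point of $Q$ is written as $[1:z:w]$. The hermitian form, with suitable signature and ordering, is $\begin{bmatrix}0&z^*&\tfrac{\I}{2}\end{bmatrix}\cdot\begin{bmatrix}1&z&\ldots\end{bmatrix}^T$-type. We then straighten the flow of $\reeb$: a point $(z,u+\I F)$ of $M_F$ corresponds to $\e^{2\I A F}$-type flow applied to the point $[1:z:0]$ of the complex hyperplane $w=0$. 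Concretely, we choose the holomorphic transversal $\{w=0\}$ in rigid coordinates and parametrize $M_F$ by flowing $[1:z:0]$ by time $u+\I F$ under the complexified one-parameter group $\e^{tA}$. The flow in $u$ is real, so the condition of lying on $Q$ reduces to the displayed equation \eqref{eqn: potential intro}, after absorbing constants into $A$. Conversely, any $F$ solving \eqref{eqn: potential intro} for an $A$ in the Lie algebra gives a rigid hypersurface which is an orbit-swept piece of $Q$, hence spherical. Its Levi form at $0$ is $\langle z,z\rangle$ because $A$ has bottom-left entry $1$, so $\reeb$ is transversal. The implicit function theorem at $z=0$, $F=0$ yields a unique real-analytic solution $F$. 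Which of these two directions carries the normalisation of $A$ is the content of Steps 3 and 4.

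\emph{Step 3 (normalising $A$).} The remaining freedom consists of automorphisms of $Q$ fixing the base point and preserving the transversal slice $\{w=0\}$ up to rigid changes, together with the rescaling of $\reeb$ fixed by the bottom-left entry $1$. This freedom acts on $A$ by conjugation. We will use the isotropy subgroup to kill entries of $A$ in stages.
\begin{enumerate}
\item Dilations and translations along the transversal fix the first row and last column. In particular they allow us to arrange that the $(3,2)$ and $(3,3)$ entries vanish and that the $(1,1)$ entry vanishes.
\item The unitary group $U(n)$ then acts on the middle block $\I X$, where $X$ is hermitian. We diagonalize $X$ and order its eigenvalues decreasingly.
\item The residual stabilizer of $X$ is $\prod U(m_k)$ over the eigenspaces, together with diagonal phases. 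Within each eigenspace it rotates $a$ so that only the first coordinate of that block is nonzero, and then makes that coordinate nonnegative.
\end{enumerate}
This produces exactly the conditions on $X$ and $a$ stated in the theorem. The entries $-2\I a^*$ and $r\in\mathbb{R}$ are then forced by membership in $\mathfrak{su}(n+1,1)$ with respect to the chosen hermitian form.

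\emph{Step 4 (uniqueness), the main obstacle.} We must show that two normalised matrices $A,A'$ giving equivalent germs coincide. An equivalence of Bochner--K\"ahler germs lifts to a rigid CR equivalence of the spheres $M_F,M_{F'}$ commuting with $\reeb$. Composing with the developing maps, we obtain an element $g\in SU(n+1,1)$ with $gAg^{-1}=A'$ that preserves the base point data. The obstacle is to show that the stabilizer of the normalisation conditions is small enough, namely that it acts trivially on the normal form. We would show this by computing the normaliser step by step. The spectrum of $X$ and the moduli $|a_j|$ of the block components are conjugation invariants relative to the flag fixed at the base point. We must also rule out hidden equivalences coming from non-isotropic group elements. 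For this we would use uniqueness of the developing map up to $SU(n+1,1)$, combined with uniqueness of the rigid normal form of the germ $F$.
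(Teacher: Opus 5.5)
Your route is the paper's: Webster's correspondence (Proposition~1), the Reeb field as an element $A$ of the sphere's symmetry algebra with $(3,1)$-entry $1$, normalisation of $A$ by the isotropy group at the base point, and then pulling back the null cone along $\Phi(z,w)=\e^{Aw}(1,z,0)^T$. Step~2 is right in substance, but you should replace ``$\e^{2\I AF}$-type'' and ``after absorbing constants into $A$'' by the identity that actually does the work. That identity is $A^*J=-JA$ for the Hermitian form $J$. It gives $\e^{A^*\bar w}J\e^{Aw}=J\e^{A(w-\bar w)}=J\e^{2\I Av}$, so nothing has to be absorbed. It is also worth noting that the initial slice $(1,z,0)$ is what puts the resulting $F$ in rigid normal form. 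After polarisation, $F|_{\bar z=0}=0$ and $\partial_{\bar z_j}F|_{\bar z=0}=z_j$. This is what connects Step~2 to your Step~1.

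The genuine gap is uniqueness. Step~4 is only a plan, and the invariance you intend to use fails for the isotropy group as a whole. Take a normalised $A$ and conjugate it by the unipotent isotropy element with parameter $\alpha$, i.e.\ the upper-triangular matrix with entries $-2\I\alpha^*$, $\alpha$, $-\I|\alpha|^2$. The result has $\hat b=-\alpha$ and $\hat X=X+2\alpha\alpha^*+|\alpha|^2\id$, consistent with Remark~\ref{rem: parameter conventions}. So the spectrum of $X$ is not invariant; it becomes an invariant only after restricting to the stabiliser of the normalisation.

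The missing step, which is what Proposition~\ref{prop2} supplies in the paper, is to compute that stabiliser:
\begin{itemize}
\item Take an isotropy element with diagonal blocks $1,tU,t^2$ and $(2,3)$-block $\alpha$. Conjugating a normalised $A$ gives $(3,1)$-entry $t^{-2}$ and $\hat b=-t^{-3}U^{-1}\alpha$.
\item Hence $\hat q=1$ and $\hat b=0$ force $t^2=1$ and $\alpha=0$.
\item Then $\hat s=0$ forces the remaining $\mathfrak g_2$-parameter $\rho$ to vanish.
\item What is left is $\operatorname{U}(n)$ acting by $(X,a,r)\mapsto(U^{-1}XU,U^{-1}a,r)$. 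The ordered eigenvalues of $X$, the norms of the components of $a$ in the eigenspaces of $X$, and $r$ are invariants of this action, and they determine the normal form.
\end{itemize}
The ``hidden'' non-isotropic equivalences you worry about cause no trouble. Local CR diffeomorphisms between open pieces of the sphere are restrictions of elements of $\operatorname{PSU}(n+1,1)$. Since $\e^{tA}$ commutes with $A$, you can slide along the Reeb orbit to match base points, so any equivalence of germs becomes an isotropy conjugation.

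Separately, Step~3(1) misidentifies the mechanism. $\hat b$ and $\hat s$ are removed by these $\mathfrak g_1$- and $\mathfrak g_2$-type isotropy elements. Dilations cannot do it, since they must be frozen to keep the $(3,1)$-entry equal to $1$. Translations along $\partial/\partial u$ cannot do it either, since they do not fix the base point.
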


Ultimately, this result follows from an explicit calculation in coordinates whose usage is standard within CR geometry (e.g., \cite{MR145555}*{Section 3})
after applying the hard-earned correspondence between Sasakian structures on flat CR manifolds and Bochner--K\"{a}hler structures observed in \cite{MR0520599}. The paucity of known Bochner--K\"{a}hler structure examples noted in \cite{MR1824987} persists today, and an immediate application of Theorem \ref{eqn: potential intro} is that it produces new examples with explicit formulas. We highlight some of these in Section \ref{sec: Examples}.

The moduli space of Bochner--K\"{a}hler structures can be deduced from Theorem \ref{thm: general potentials}. The first computation of this moduli space was derived in \cite{MR1824987}, where it is described as the orbit space of $\I \mathfrak{u}(n)\oplus \mathbb{C}^n\oplus \mathbb{R}$ under a natural action of the unitary group $\operatorname{U}(n)$. To align these descriptions, note that the first part of Theorem \ref{thm: general potentials} remains true if we relax constraints on $X$ and $a$ in \eqref{eqn: potential parameters intro}, instead requiring only that $X$ be Hermitian and $a\in\mathbb{C}^n$. This more general set of matrices is isomorphic to $\I \mathfrak{u}(n)\oplus \mathbb{C}^n\oplus \mathbb{R}$ and is invariant under conjugation by matrices of the form 
\[
\begin{bmatrix}
    1& 0 & 0\\0& U & 0\\ 0&0&1
\end{bmatrix}
\quad\quad\mbox{ with } U\in \operatorname{U}(n),
\]
which defines the relevant action of $\operatorname{U}(n)$ on $\I \mathfrak{u}(n)\oplus \mathbb{C}^n\oplus \mathbb{R}$. Unique representatives for each orbit under this action are given by matrices of the form \eqref{eqn: potential parameters intro} with $X$ and $a$ satisfying the additional constraints of Theorem \ref{thm: general potentials}.

With the moduli space parameterisation of Theorem \ref{thm: general potentials} in hand, we are able to compute the infinitesimal automorphism algebras of all Bochner--K\"{a}hler structures.
\newtheorem*{thmB}{\bf Theorem \ref{thm: infinitesimal symmetries}}
\begin{thmB}
Let $M$ be the (local) Bochner--K\"{a}hler manifold associated with the parameters $a$, $X$, and $r$ of Theorem \ref{thm: general potentials}. The infinitesimal automorphisms of $M$ correspond to the vector fields
\[
(\beta+\I \hBlock z+\alpha w+2\I \langle z,\alpha\rangle z+\rho zw)\frac{\partial}{\partial z}+(2\I \langle z,\beta \rangle + 2\I \langle z,\alpha\rangle w+\rho w^2)\frac{\partial}{\partial w},
\]
where 
\[
    \alpha=\I X \beta 
    \quad\mbox{ and }\quad\rho =2\Im \langle \beta,a\rangle
\]
and the Hermitian matrix $\hBlock$ and $\beta\in\mathbb{C}^n$ satisfy 
\[
    [\hBlock,X]=-2\I(a\beta^*+\beta a^*)
    \quad\mbox{ and }\quad
    \I \hBlock a=(X^2+r)\beta.
\]
\end{thmB}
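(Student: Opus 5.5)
The plan is to reduce the computation to linear algebra in $\mathfrak{su}(n+1,1)$, using the Sasaki--K\"ahler correspondence behind Theorem \ref{thm: general potentials}.

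First, I would establish the dictionary. The Bochner--K\"ahler manifold with potential $F$ corresponds to the rigid spherical hypersurface $\Im w = F(z,\bar z)$, whose Reeb field is $\partial/\partial(\Re w)$.

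\begin{itemize}
\item An infinitesimal automorphism of the K\"ahler structure (a holomorphic Killing field) lifts to an infinitesimal CR automorphism of this hypersurface that commutes with the Reeb field.
\item The lift is unique modulo the Reeb field.
\item Conversely, every CR symmetry commuting with the Reeb field descends to a K\"ahler symmetry.
\end{itemize}

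Since the hypersurface is spherical, its CR symmetry algebra is $\mathfrak{su}(n+1,1)$. In the Heisenberg model $\Im w=\langle z,z\rangle$ this algebra is realised by the standard polynomial vector fields. These are graded by weight ($z$ of weight $1$, $w$ of weight $2$), with components in degrees $-2,-1,0,1,2$:
\begin{itemize}
\item degree $-2$: $q\,\partial_w$;
\item degree $-1$: the $\beta$-terms;
\item degree $0$: $\I\hBlock z\,\partial_z$ together with a dilation;
\item degree $1$: the $\alpha$-terms;
\item degree $2$: the $\rho$-terms.
\end{itemize}

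Next, I would read off from the formula of Theorem \ref{thm: general potentials} the vector field $Z_A$ that represents the Reeb field on the model. Under the standard identification of $\mathfrak{gl}_{n+2}(\mathbb{C})$ with projective vector fields, this is the field of the matrix $A$. Its components are:
\begin{itemize}
\item the entry $1$ gives the translation part $\partial_w$;
\item $a$ and $-2\I a^*$ give degree $-1$ and degree $1$ pieces;
\item $\I X$ gives a degree $0$ piece;
\item $r$ gives a degree $2$ piece.
\end{itemize}
The symmetry algebra of $M$ is then the centraliser of $Z_A$ (equivalently of $A$) in $\mathfrak{su}(n+1,1)$, taken modulo $\mathbb{R}Z_A$. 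I would use this quotient to normalise away the $\partial_w$ component. That is why the displayed family carries no $q\,\partial_w$ term.

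The main step is to write a general element $Y$ with parameters $(q,\beta,\hBlock,\delta,\alpha,\rho)$, where $\delta$ is the dilation coefficient. I would compute $[A,Y]$ as a matrix commutator and set each graded block to zero.
\begin{itemize}
\item The lowest-degree equations, from the $\partial_w$ part of $A$ bracketed against $Y$, should express $\alpha$ and the dilation in terms of the lower-degree parameters. I expect them to give $\alpha=\I X\beta$ and force the dilation to vanish.
\item The scalar equations arising from $a$ and $a^*$ should give $\rho=2\Im\langle\beta,a\rangle$.
\item The degree $0$ block in $\mathfrak{u}(n)$ should give $[\hBlock,X]=-2\I(a\beta^*+\beta a^*)$.
\item The $\mathbb{C}^n$-valued block involving $r$ and the composition $X\cdot X$ should give $\I\hBlock a=(X^2+r)\beta$.
\end{itemize}
Finally, I would check that the remaining blocks are automatically satisfied given these relations, by using Hermitian symmetry and taking adjoints. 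I would then translate the resulting matrix back into the displayed vector field.

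I expect the main obstacle to be the bookkeeping rather than any conceptual step. Two things need care:
\begin{itemize}
\item fixing the precise isomorphism between matrices and vector fields, with signs and factors of $\I$ and $2$ consistent with the Hermitian form implicit in \eqref{eqn: potential intro};
\item verifying rigorously the first-step claim that K\"ahler symmetries are exactly Reeb-commuting CR symmetries modulo the Reeb field.
\end{itemize}
For the first point, I would fix conventions by checking that the centraliser computation reproduces the known symmetries of the flat case $a=0$, $X=0$, $r=0$.
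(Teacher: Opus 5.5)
Your plan is the paper's proof. You represent $\reeb$ by the matrix $A$, take representatives \eqref{indef su rep symmetry} with the lower-left entry removed by the quotient by $\reeb$, and set $[Y,A]=0$ entry by entry. Your predictions $\sigma=0$, $\alpha=\I X\beta$, $\rho=2\Im\langle\beta,a\rangle$ and \eqref{aut1} are what the computation gives. Two points need correcting, though.

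\textbf{The sign of the $\hBlock a$ equation.} The $(2,3)$ entry of the commutator is $r\beta-\sigma a+\I\hBlock a-\I X\alpha$. With $\sigma=0$ and $\alpha=\I X\beta$ this becomes $r\beta+\I\hBlock a+X^2\beta=0$, that is, $-\I\hBlock a=(X^2+r)\beta$. This is exactly \eqref{aut2} in the paper's proof, and the sign in the displayed statement is a slip.

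No choice of conventions rescues the displayed version. The matrix/vector-field dictionary can at most flip the sign attached to $r$ and $\rho$. The relative sign between $\I\hBlock a$ and $X^2\beta$ is fixed, because $X^2\beta$ arises as $-\I X\alpha$ once $\alpha=\I X\beta$. Your proposed calibration on the flat case $a=0$, $X=0$, $r=0$ cannot detect this, since the equation is vacuous there. Calibrate on a case with $a\neq0$, or simply trust the matrix computation.

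\textbf{The remaining entries are not all adjoints.} The $(1,2)$ and $(3,2)$ entries are indeed multiples of the adjoints of $(2,3)$ and $(2,1)$. The scalar entries impose two extra conditions that do not appear in the theorem:
\begin{itemize}
\item The $(1,1)$ entry $\rho+2\I\langle\beta,a\rangle$ vanishes only if, besides $\rho=2\Im\langle\beta,a\rangle$, also $\Re\langle\beta,a\rangle=0$.
\item The $(1,3)$ entry forces $\Im\langle\alpha,a\rangle=0$.
\end{itemize}
Adjoints do not dispose of these.

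The paper shows both are redundant via the diagonal of \eqref{aut1}, namely $a_j(\beta_j+\bar\beta_j)=0$, using that the normalised $a$ is real and $X$ is real diagonal. Without the normalisation you can argue with traces:
\begin{itemize}
\item The trace of \eqref{aut1} gives $\Re\langle\beta,a\rangle=0$.
\item Multiplying \eqref{aut1} by $X$ and taking the trace, using $\tr(X[\hBlock,X])=0$, gives $\Re(\beta^*Xa)=0$, which is equivalent to $\Im\langle\alpha,a\rangle=0$.
\end{itemize}

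A minor point: $a$ and $-2\I a^*$ are both the degree $+1$ component of $A$. $A$ has no degree $-1$ part, because $\hat b$ was normalised away. This is harmless for the matrix computation.
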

A more detailed description of these symmetry algebras is given in Section \ref{sec: ISA} (Proposition \ref{prop: symmetry algebras}).

The structure of the paper is as follows. In Section 2, we recall the 
fundamental identification between Bochner and Chern-Moser curvature. 
In Section 3, we describe locally Bochner flat K\"{a}hler structures and the corresponding moduli space. 
In Section 4, we prove Theorem 1, providing a complete description of K\"{a}hler potentials defining Bochner--K\"{a}hler structures. 
 Section 5 is devoted to examples, presenting known examples within our new framework, and providing some new closed form examples. 
In Section 6, we prove Theorem \ref{thm: infinitesimal symmetries}, and present a complete classification of the K\"{a}hler symmetry algebras associated with each point in the normalised moduli space parameters of Theorem \ref{thm: general potentials} for Bochner--K\"{a}hler structures on complex surfaces and complex $3$-manifolds.

\section{Metric Curvature and CR curvature}
Consider the CR manifold $N \subseteq \mathbb C^{n+1}$ given in coordinates $w, z_1, \dots z_n$ as a graph 
\begin{equation}\label{eqn: graph form}
v=F(z,u),
\end{equation}
where $w = u+\I v$.

If $N$ is Sasakian locally around $0$ with distinguished Reeb vector field $\reeb=\frac{\partial}{\partial u}$ then (locally) $F$ does not depend on $u$, and $F$ can be regarded as a (local) potential of the K\"{a}hler form $\omega=\tfrac{\I}{2}\partial \overline{\partial} F$ on a complex manifold $M$ in local coordinates $(z_1,\ldots, z_n)$. This is the coordinate realisation of a correspondence between $(2n+1)$-dimensional Sasakian structures and $2n$-dimensional K\"{a}hler structures. Alternatively, independent of coordinates, the corresponding $(M,\omega)$ can be described locally as the leaf space of the foliation generated by the Sasakian structure's distinguished Reeb field with K\"{a}hler structure induced from the CR structure's Levi form and almost complex structure operator, which indeed descend coherently to the leaf space as it is formed via factorising by the action of a one-parametric CR symmetry subgroup. In \cite{MR0520599}, the Chern--Moser curvature tensor of the CR structure on $M$ is identified with the Bochner curvature tensor on the corresponding K\"{a}hler structure of $(M,\omega)$, and we recall now fundamentals of this identification, as they are essential for the sequel.

The Chern--Moser curvature tensor at a given reference point can be read from the Chern--Moser normal form of the defining equation. Recall that a real-analytic defining equation is in Chern--Moser normal form if
$$F=\langle z,z\rangle + F_{22}+ F_{32}+F_{23}+F_{33} +\sum_{\stackrel{\min(j,k)\ge 2}{\max(j,k)\ge4}} F_{jk},$$
where $F_{jk}$ are polynomials in $z$ of degree $j$ and in $\bar{z}$ of degree $k$ with coefficients being analytic functions of $u$, and the terms $F_{22}$, $F_{32}=\overline{F_{23}}$ and $F_{33}$ satisfy the trace conditions $\tr F_{22}=0$, $\tr^2 F_{32}=0$, and $\tr^3 F_{33}=0$, \cite{MR425155}.

The Chern--Moser invariant at the origin is the tensor $S(0)=F_{22}|_{u=0}$. If $S = 0$ in an open neighbourhood then $N$ is locally equivalent to the Heisenberg sphere
$$v=\langle z,z\rangle$$
as a CR manifold. The Chern--Moser normalisation process does not respect the infinitesimal automorphism $\reeb$, so for Sasakian manifolds an alternative ``rigid'' normal form is needed, that is, a normal form from which the distinguished infinitesimal automorphism can be recovered. Here ``rigid'' refers to the terminology of ``rigid hypersurfaces''\cite{baouendi1985cr} (also referred to in \cite{MR145555} as ``regular''), with defining equations that do not depend on the variable $u$. 
The rigid normalisation process is nothing but the first steps of the Chern--Moser normalisation applied to \eqref{eqn: graph form}, ignoring the trace conditions. In the resulting equation the terms $F_{jk}$ are constants, i.e., they do not depend on $u$.

The Riemann curvature of $M$ is given by   
$$R_{k,\bar{j},m}^n=-G_{m\bar{s},k\bar{j}}(G^{-1})^{\bar{s}n}+ G_{m\bar{s},\bar{j}}(G^{-1})^{\bar{s}r}G_{r\bar{t},k}(G^{-1})^{\bar{t}n},$$
where $G$ is the metric tensor with $G_{m\bar{n}}=\frac{\partial^2F}{\partial z_m\partial z_{\bar{n}}}$, and $G_{m\bar{s},k\bar{j}}$ and $G_{m\bar{s},\bar{j}}$ denote $\frac{\partial^4F}{\partial z_{\bar{j}} \partial z_k \partial z_{\bar{s}}\partial z_{m}}$ and $\frac{\partial^3F}{\partial z_{\bar{j}} \partial z_{\bar{s}}\partial  z_m}$, respectively. See, e.g., \cite{MR4729636}.  If $F$ is in normal form, we have $G(0)=\id$, $G_{,k}(0)=0$, with $\id$ denoting the identity matrix. Hence,
\[
R(0)=-G_{,k\bar{j}}(0)=-F_{22}.
\]
 
 Assume that $N$ is rigid and locally given in \emph{rigid normal form} (see \cites{St91,MR3833790}).  Recall that the rigid normal form can be achieved in a simple two-step procedure starting from \eqref{eqn: graph form}: First, the pluriharmonic terms $F_{k0}$ and $F_{0k}=\overline{F_{k0}}$ ($k\ge 0$) are removed by the transformation $w-2\I \sum F_{k0} \to w$. Second, the terms $F_{k1}$  ($k> 1$), which have the form $F_{k1}=\sum_{j=1}^n \alpha_k^j \bar{z}_j$ for some coefficients $\alpha_k^{j}$, along with  $F_{1k}=\overline{F_{k1}}$, are removed by the transformation $\sum_{k=1}^\infty \alpha^j_{k}\to z_j$. 
  Then the next step in order to achieve Chern--Moser normal form is a coordinate transformation
 \begin{align*}
 z&=C(W)Z\\
 w&=h(W)
 \end{align*}
 where $C|_{w=u}$ is unitary with respect to the hermitian form $\langle\cdot,\cdot\rangle$. This results in the elimination of the trace of $F_{22}.$ The resulting traceless $F_{22}$ in normal form is exactly the Chern--Moser invariant $S$, if $n>1$.  It follows that $M$ has vanishing Bochner curvature at the reference point if and only if $N$ is CR umbilic (i.e., vanishing $S$) at the reference point.  It is well known that CR umbilicity in an open neighbourhood implies local CR flatness, that is, local CR equivalence to the Heisenberg sphere. This proves

\begin{prop}\label{BK manifold to sphere correspondence}
The K\"{a}hler manifold $M$ of dimension $n\ge 2$ and of class $\mathcal C^5$ is Bochner flat if and only if the corresponding Sasakian manifold $N$ is spherical, that is, locally CR equivalent to the Heisenberg sphere
$$v=\sum_{j=1}^n |z_j|^2.$$ 
\end{prop}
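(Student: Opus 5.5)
The plan is to run the argument sketched just before the statement pointwise, and then pass from pointwise vanishing to vanishing on an open set. Fix $p\in M$ and a local Kähler potential $F$ near $p$, so that $\omega=\tfrac{\I}{2}\partial\overline{\partial}F$. Realise the Sasakian manifold $N$ locally as the rigid hypersurface $v=F(z)$ in $\mathbb{C}^{n+1}$, with Reeb field $\reeb=\partial/\partial u$. The point $p$ is arbitrary, and both the Bochner tensor and the Chern--Moser tensor are tensorial. So it suffices to show that, for each reference point, the Bochner tensor of $\omega$ vanishes there if and only if the Chern--Moser tensor $S$ of $N$ vanishes at the corresponding point of $N$.

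\textbf{Step 1 (rigid normal form).} Put $F$ into rigid normal form at the reference point by the two-step procedure described above. First remove the pluriharmonic terms $F_{k0},F_{0k}$ via $w\mapsto w-2\I\sum F_{k0}$; this changes $F$ by a pluriharmonic function and so does not change $\omega$. Then remove the $F_{k1},F_{1k}$ terms by a holomorphic change of the $z$ coordinates; this is a biholomorphism of $M$. Finally apply a linear change to get $F_{11}=\langle z,z\rangle$. All of these maps preserve rigidity and $\reeb$, and they act on $M$ by holomorphic isometries up to pluriharmonic changes of the potential. In these coordinates $G(0)=\id$ and $G_{,k}(0)=0$, so the Riemann curvature formula gives $R(0)=-F_{22}$. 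The Bochner tensor at $0$ is the trace-free part of $R(0)$, that is, $-F_{22}$ minus its projection onto terms of the form $\langle\cdot,\cdot\rangle\otimes$(Hermitian form).

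\textbf{Step 2 (passage to Chern--Moser normal form).} Continue with a Chern--Moser normalisation $z=C(W)Z$, $w=h(W)$, where $C|_{w=u}$ is unitary. I need to check that this step only subtracts from $F_{22}$ a term of the form $\langle z,z\rangle\cdot(\text{Hermitian form in }z)$, and leaves the trace-free part unchanged. I will do this by tracking the $(2,2)$-component through the first few orders of the Chern--Moser normalisation, which is a standard computation. The outcome is that $S(0)$ equals the trace-free part of $F_{22}$, which is the Bochner tensor up to sign. This step is where $n\ge2$ is used: for $n=1$ the trace-free part is trivially zero and the Chern--Moser invariant lives at higher order. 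The $\mathcal C^5$ hypothesis is what makes $F_{22}$ and the curvature well defined and continuous. Note that the Chern--Moser normal form is stated for real-analytic hypersurfaces, but only the finite jet up to order $4$ is needed here, and that jet exists for $\mathcal C^5$ data.

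\textbf{Step 3 (from pointwise to local).} Bochner flatness on a neighbourhood is now equivalent to $S\equiv0$ on the corresponding open subset of $N$. The Chern--Moser theorem, that a Levi-nondegenerate hypersurface with $n\ge2$ which is umbilic on an open set is locally spherical, then gives local CR equivalence to $v=\sum|z_j|^2$. For the converse, sphericity implies $S\equiv0$ because $S$ is a CR invariant, and Steps 1--2 then give vanishing Bochner curvature.

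\textbf{Main obstacle.} I expect the main difficulty to be Step 2: showing that the non-rigid normalisation changes $F_{22}$ only by trace terms, and that the identification is independent of choices. A secondary issue is justifying the Chern--Moser flatness theorem at finite smoothness $\mathcal C^5$ rather than for analytic data. For that I would invoke the Cartan--Tanaka connection formulation, in which vanishing of $S$ for $n\ge2$ forces vanishing of the full curvature through the Bianchi identities.
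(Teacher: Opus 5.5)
Your proposal is correct and follows essentially the same route as the paper. You pass to rigid normal form at an arbitrary reference point to get $R(0)=-F_{22}$. You then note that the Chern--Moser step $z=C(W)Z$, $w=h(W)$ changes $F_{22}$ only by terms $\langle z,z\rangle\cdot(\text{Hermitian form})$, so that $S(0)$ is the trace-free part of $F_{22}$, i.e.\ the Bochner tensor up to sign, using $n\ge 2$. Umbilicity on an open set then gives local sphericity. Your appeal to the Cartan connection for flatness at finite smoothness matches the paper's remark that a $\mathcal C^5$ metric, hence a $\mathcal C^7$ potential, suffices for the full Cartan prolongation.
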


{\bf Remark.} A K\"{a}hler manifold with a metric tensor of class $\mathcal C^5$ has a K\"{a}hler potential of class $\mathcal C^7$, which is sufficient for the complete Cartan prolongation of the corresponding CR manifold, and hence to conclude its CR flatness.  According to a recent result by Kossovskiy and Zaitsev \cite{KZ}, a strictly pseudoconvex hypersurface $M$ in $\mathbb C^2$ is spherical if it is of class $\mathcal C^6$ and the fundamental Chern--Moser invariant vanishes identically. This result is believed to remain true in higher dimensions for rigid, strictly pseudoconvex CR hypersurfaces of class $\mathcal C^4$, which applies to our situation and reduces the required \emph{a priori} smoothness of the metric tensor to $\mathcal C^2$. Notice that the correspondence with spherical Sasakian manifolds implies \emph{a posteriori} real analitycity.

\section{Bochner--K\"{a}hler moduli and CR symmetries}\label{sec: moduli and symmetries}

Locally, a spherical Sasakian manifold is completely determined by the choice of an infinitesimal CR automorphism on a spherical CR manifold that is also a Reeb vector field. The Lie algebra of infinitesimal automorphisms of the Heisenberg sphere is the $(n^2+4n+3)$-dimensional Lie algebra $\mathfrak{su}(n+1,1)$, which has a grading
\[
\mathfrak{su}(n+1,1)=\mathfrak g_{-2}\oplus \mathfrak g_{-1}\oplus \mathfrak g_{0}\oplus \mathfrak g_{1}\oplus \mathfrak g_{2},
\]
where $\mathfrak{g}_{0}\oplus \mathfrak g_{1}\oplus \mathfrak g_{2}$ corresponds to an isotropy subalgebra composed of all infinitesimal automorphisms that vanish at some fixed point $p\in N$.

Any infinitesimal automorphism that is transverse to the CR distribution (i.e., it has a non-zero $\mathfrak g_{-2}$ component) can be the Reeb vector field of a Sasakian manifold. Different infinitesimal automorphisms will give rise to equivalent Sasakian manifolds if they are related by a CR automorphism of the Heisenberg sphere. 

Applying Proposition \ref{BK manifold to sphere correspondence}, we will locally describe Bochner flat K\"{a}hler structures as a quotient manifold of the germ at $(w,z)=0$ of the Heisenberg sphere
\begin{align}\label{Heisenberg sphere}
\Im w =\langle z,z\rangle= \sum_{j=1}^n |z_j|^2
\end{align}
equipped with a distinguished transverse infinitesimal automorphism, which has the form
\begin{align}\label{Reeb VF}
\reeb=&(\hat{b}+(\hat{s}\id +\I \hat{X})z+\hat{a}w+2\I \langle z, \hat{a} \rangle z + \hat{r} zw) \frac{\partial}{\partial z} + \\ 
& + (1+ 2\I \langle z,\hat{b}\rangle + 2\hat{s}w+ 2\I \langle z,\hat{a}\rangle w+ \hat{r}w^2)\frac{\partial}{\partial w},\nonumber
\end{align}
where $\hat{b},\hat{a}\in \mathbb C^n$, $\hat{s},\hat{r}\in\mathbb R$ and $\hat{X}$ is a Hermitian matrix. Up to scaling, this is the general form for $\Re(\reeb)$ to be a Reeb vector field in an open neighbourhood of $0$, and the local quotient manifold modulo the flow of $\Re(\reeb)$ is a Bochner--K\"{a}hler manifold. 

Our next task is to normalise the parameters $\hat{b},\hat{a}, \hat{s},\hat{r},\hat{X}$ in \eqref{Reeb VF}, achieving $\hat{b}=0$ and $\hat{s}=0$ in particular, and throughout the sequel we use undecorated labels ${a},{r},{X}$ for the parameters constrained to their normal forms. In Section \ref{sec: potentials thm proof}, we discuss an alternate partial normalisation with $\hat{b}\neq 0$, and the undecorated $b$ will be used only then for this alternate normalisation. 

 CR automorphisms of the Heisenberg sphere  also act on the Lie algebra of infinitesimal automorphisms by the adjoint action. They  result in equivalences of the resulting K\"{a}hler manifolds if they change $\reeb$, and they  result in automorphisms (isometries) of the K\"{a}hler manifold if they preserve $\reeb$. Similarly, to describe equivalence of germs of Bochner--K\"{a}hler manifolds at $0$, as we will, one should instead consider the smaller orbits of the action by the isotropy subgroup inside the CR automorphisms.

The Lie algebra, $\mathfrak{su}(n+1,1)$, of infinitesimal automorphisms of the Heisenberg sphere can be represented by

\begin{equation}\label{indef su rep}
\begin{bmatrix}-\hat{s}-\I c & -2\I \hat{a}^* & \hat{r} \\ \hat{b} & \I \hat{X} -\I c &  \hat{a}\\ \hat{q} & 2\I \hat{b}^* & \hat{s} -\I c \end{bmatrix}
,
\end{equation}
which are skew-hermitian with respect to
\begin{equation}\label{eqn: indef su rep h-form}
\begin{bmatrix} 0 & 0 &\frac{\I}{2} \\ 0 & \id & 0\\ -\frac{\I}{2} & 0 & 0 \end{bmatrix}.
\end{equation}
 Here $\hat{q}\in\mathbb R$, $\hat{b},\hat{a} \in\mathbb C^n$, $\hat{X}$ is hermitian, and $\hat{s},\hat{r}\in\mathbb R$ are as in \eqref{Reeb VF}, whereas $c$ parameterises different elements in a common coset, regarding $\mathfrak{su}(n+1,1)$ as the coset space $\mathfrak{cu}(n+1,1)/\mathbb{C}=(\mathfrak{u}(n+1,1)\oplus \mathbb{R})/\mathbb{C}$.

This representation comes from a standard construction in CR geometry (e.g., \cite{MR145555}*{Section 4}) whereby the sphere $S^{2n+1}\subset \mathbb{C}^{n+1}$ is regarded as the projectivised null cone of a Lorentzian-signature Hermitian form on $\mathbb{C}^{n+2}$. Specifically, taking $C:=\{v\in \mathbb{C}^{n+2}\,|\, \langle v,v\rangle=0\}$, where $\langle\cdot,\cdot\rangle$ is the Hermitian form represented by \eqref{eqn: indef su rep h-form} in some coordinates
\[
v=(\psi,\phi,\omega)\in\mathbb{C}\oplus\mathbb{C}^n\oplus\mathbb{C}\cong\mathbb{C}^{n+2},
\]
we have $\lambda v\in C$ for all $\lambda\in\mathbb{C}$ and $v\in C$, so $C$ can be projectivised to define the real hypersurface $\mathbb{P}(C)$ in $\mathbb{CP}^{n+1}$. The hypersurface $\mathbb{P}(C)$ is indeed CR spherical, as it locally has the forms 
\[
\big\{\Im(w)=\sum z_{j}\overline{z_j}\big\}=\mathbb{P}(C)\cap\left\{[\psi:\phi:\omega]\in \mathbb{CP}^{n+1}\,|\,\psi\neq 0 \right\}
\]
and
\[
\big\{\Im(w^\prime)=\sum z_{j}^\prime\overline{z_j^\prime}\big\}=\mathbb{P}(C)\cap\left\{[\psi:\phi:\omega]\in \mathbb{CP}^{n+1}\,|\,\omega\neq 0 \right\},
\]
on open sets congruent to $\mathbb{C}^{n+1}$, where $(z,w)=\left(\frac{\phi}{\psi},\frac{\omega}{\psi}\right)$ and $(z^\prime,w^\prime)=\left(\frac{\phi}{\omega},\frac{\psi}{\omega}\right)$. The group $U(n+1,1)$ of biholomorphisms on $\mathbb{C}^{n+2}$ preserving the Hermitian form \eqref{eqn: indef su rep h-form} preserves $\mathbb{P}(C)$ under its induced action. The special unitary subgroup $\operatorname{SU}(n+1,1)$ generates the same induced symmetries, and it is known to produce all of the CR symmetries of $\mathbb{P}(C)$, which identifies the sphere's infinitesimal symmetries with \eqref{indef su rep}.

 In order to make the matrix \eqref{indef su rep} traceless we need to adjust the trace by choosing $c=\frac{1}{n+2} \operatorname{tr} \hat{X}$, yielding the standard representation. Below, we mostly use the representation of 
 $\mathfrak{su}(n+1,1)$ by the quadratic vector fields  \eqref{Reeb VF} for which the trace is adjusted by the choice $c=\I s$. For such choice $\mathfrak{su}(n+1,1)$ should be interpreted as the quotient  $\mathfrak{cu}(n+1,1)/\mathbb{C}$ and one should keep in mind that the adjustment of the trace does not affect the following calculations with commutators. We are interested in infinitesimal automorphisms with $\hat{q}\neq 0$. After scaling by a constant we may assume that $\hat{q}=1$. This just scales the resulting K\"{a}hler metric by a constant.

The moduli space of the (local) Bochner--K\"{a}hler manifolds is now the quotient space of $\mathfrak{su}(n+1,1)$ (with $q\neq 0$) modulo the adjoint action of the CR isotropy subgroup inside of $\operatorname{SU}(n+1,1)$.
See Theorem 3.1 in \cite{MR1824987}. For the germs at $0$ we may look at the quotient space with respect to the subgroup that preserves the origin. In the representation \eqref{indef su rep} with $c=\I \hat{s}$, this subgroup consists of the triangular matrices of the form
\begin{equation}\label{eqn: CR isotropy representation}
\begin{bmatrix}1 & -2\I \alpha^* &-\rho-\I \langle \alpha,\alpha\rangle \\ 0 & tU &  \alpha\\ 0 & 0 & t^2  \end{bmatrix},
\end{equation}
(which can be re-scaled to determinant $1$ by instead using the representation with a different choice for $c$, but that doesn't affect the following calculations).  

Consider the adjoint action by this subgroup 
\begin{multline*}
\begin{bmatrix}1 & 2\I \alpha^* &\rho-\I \langle \alpha,\alpha\rangle \\ 0 & \id  &  -\alpha\\ 0 & 0 & 1  \end{bmatrix}\begin{bmatrix}-\hat{s} & -2\I \hat{a}^* &\hat{r} \\ \hat{b} & \I \hat{X}  &  \hat{a}\\ 1 & 2\I \hat{b}^* & \hat{s}  \end{bmatrix}\begin{bmatrix}1 & -2\I \alpha^* &-\rho-\I \langle \alpha,\alpha\rangle \\ 0 & \id &  \alpha\\ 0 & 0 & 1  \end{bmatrix}\\= \begin{bmatrix}*& * & *\\ \hat{b}-\alpha &* &  *\\ 1 & 2\I({\hat{b}}^*-\alpha^*) & * \end{bmatrix}.
\end{multline*}
By choosing $\alpha=\hat{b}$ we can eliminate the parameter $\hat{b}$ in the vector field. The action of the subgroup
$$
\begin{bmatrix}1 & 0 & \frac{\rho}{t^2} \\ 0 & \frac1{t} U^{-1}  & 0\\ 0 & 0 & \frac{1}{t^2}  \end{bmatrix}\begin{bmatrix}-\hat{s} & -2\I \hat{a}^* &\hat{r} \\ 0 & \I \hat{X}  &  \hat{a}\\ 1 & 0 & \hat{s}  \end{bmatrix}\begin{bmatrix}1 & 0 &-\rho \\ 0 & tU &  0\\ 0 & 0 & t^2  \end{bmatrix}= \begin{bmatrix}-\hat{s}+\frac{\rho}{t^2}& -2\I t \hat{a}^*U& *\\ 0 &\I U^{-1}\hat{X}U &  tU^{-1} \hat{a}\\ \frac{1}{t^2} & 0 & \hat{s}-\frac{\rho}{t^2} \end{bmatrix},
$$
where $U$ is a unitary matrix, $t$ a nonzero real parameter and $\rho$ a real parameter, preserves $\hat{b}=0$. We choose $t=1$ to keep $\hat{q}=1$ and $\rho=\hat{s}$ to eliminate $\hat{s}$. Going forward, we will use the undecorated labels $X$, $a$, and $r$ to represent parameters determining $\reeb$ in this normalised form, which is
\begin{equation}\label{Reeb VF normalised}
\reeb=\left(\I X z+aw+2\I \langle z, a \rangle z + r zw\right) \frac{\partial}{\partial z} + \left(1 + 2\I \langle z,a\rangle w+ rw^2\right)\frac{\partial}{\partial w}.
\end{equation}

This leaves the parameters $\hat{X}$, $\hat{a}$, $\hat{r}$ modulo the adjoint action of $U^{-1}$ on $\hat{X}$, the standard action of $U^{-1}$ on $\hat{a}$ and the trivial action on $\hat{r}$. This remaining action by $\operatorname{U}(n)$ can furthermore be applied to diagonalise $X$ and normalise $a$ so that it has all nonnegative real-valued entries with a maximal number of zeros. Up to a permutation of basis elements, this fully normalises the parameters $a$, $X$, and $r$ representing $\reeb$. Altogether we have established the following.

\begin{prop}\label{prop2}
The moduli space of (local) Bochner--K\"{a}hler manifolds is the quotient space of $\mathcal H_n\oplus \mathbb C^n \oplus\mathbb R$ modulo the $\operatorname{U}(n)$ action described above, with $(X,a,r)$ regarded as general elements in $\mathcal H_n\oplus \mathbb C^n \oplus\mathbb R$. Here $\mathcal H_n$ is the space of hermitian $n\times n$ matrices. By means of the $\operatorname{U}(n)$ action one can normalise $(X,a,r)$ such that
    \begin{itemize}
        \item $X$ is diagonal with entries ordered such that $X_{j,j}\geq X_{j+1,j+1}$, and
        \item $a_{j+1}=0$ whenever  $X_{j,j}=X_{j+1,j+1}$,
    \end{itemize}
which identifies the moduli space with a subset in $\mathbb R^n \times \mathbb R_{\ge 0}^n\times \mathbb R$ containing a dense open subset. 

If the eigenvalues of $X$ are distinct then the subgroup of $\operatorname{U}(n)$ that keeps $X$ diagonal is $\operatorname{U}(1)^n$, which can be used to make all entries of $a$ real and non-negative. Such cases cover an open and dense piece of the moduli space. 
\end{prop}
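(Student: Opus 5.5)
The argument assembles the normalisations carried out above, and I will present it in three steps.

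\emph{Step 1: identify germs with normalised Reeb fields.} By Proposition \ref{BK manifold to sphere correspondence}, the germ of a Bochner--K\"{a}hler structure at a point is the leaf-space quotient of a germ of spherical Sasakian manifold. Choose a CR equivalence onto the germ at $0$ of the Heisenberg sphere \eqref{Heisenberg sphere}. Under it, the Reeb field becomes a transverse infinitesimal automorphism, and after scaling (which only rescales the metric) it has $\hat q=1$ in the matrix representation \eqref{indef su rep}. Two such germs give equivalent K\"{a}hler germs exactly when the two Reeb fields are conjugate under the isotropy group \eqref{eqn: CR isotropy representation}. Indeed, a K\"{a}hler equivalence lifts to a Sasakian (CR) equivalence commuting with the Reeb flows; the lift is unique up to the Reeb flow, which can be used to fix the origin. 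Conversely, such a conjugation descends to the quotient.

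\emph{Step 2: reduce to the residual $\operatorname{U}(n)$ action.} By the adjoint computations above, the choice $\alpha=\hat b$ removes $\hat b$. The subgroup with $\alpha=0$, $t=1$, $\rho=\hat s$ then removes $\hat s$ while preserving $\hat b=0$ and $\hat q=1$. What remains is to find the stabiliser of the normal form $\hat b=0,\hat s=0,\hat q=1$ in the isotropy group. From the displayed conjugation formula:
\begin{itemize}
\item the $(3,1)$ entry $1/t^2=1$ forces $t=\pm1$;
\item the $(2,1)$ entry $\hat b-\alpha$ must vanish, which forces $\alpha=0$;
\item the $(1,1)$ entry $-\hat s+\rho/t^2$ must vanish, which forces $\rho=0$.
\end{itemize}
The residual group is therefore $\{tU\}$ with $t=\pm1$; absorbing the sign into $U$ gives $\operatorname{U}(n)$. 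It acts by $X\mapsto U^{-1}XU$, $a\mapsto U^{-1}a$, and trivially on $r$. This identifies the moduli space with $(\mathcal H_n\oplus\mathbb C^n\oplus\mathbb R)/\operatorname{U}(n)$. I expect this stabiliser bookkeeping to be the only step needing care, in particular checking that the sign $t=-1$ and the scalar coset parameter $c$ introduce no additional identifications.

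\emph{Step 3: normal forms and density.} First diagonalise $X$ by a unitary $U$ with the eigenvalues in decreasing order. The stabiliser of the diagonal $X$ is $\prod_\lambda \operatorname{U}(m_\lambda)$, one factor for each eigenvalue $\lambda$ of multiplicity $m_\lambda$. Within each eigenspace block, $\operatorname{U}(m_\lambda)$ acts transitively on vectors of a given norm, so the block of $a$ can be rotated to $(\|a_\lambda\|,0,\dots,0)$. This produces a nonnegative entry followed by zeros, which is exactly the condition $a_{j+1}=0$ whenever $X_{j,j}=X_{j+1,j+1}$. The representative is unique, since the eigenvalues with multiplicities and the block norms $\|a_\lambda\|$ are $\operatorname{U}(n)$-invariants. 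This gives an injection of the moduli space into $\mathbb R^n\times\mathbb R^n_{\ge0}\times\mathbb R$.

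Distinct eigenvalues form an open dense set in $\mathcal H_n$. In that case every $m_\lambda=1$, the stabiliser is $\operatorname{U}(1)^n$, each phase of $a$ can be removed, and the image contains the open dense set of strictly decreasing $X_{j,j}$ with arbitrary $a\in\mathbb R^n_{\ge0}$ and $r\in\mathbb R$.
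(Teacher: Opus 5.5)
Your proposal is correct and follows essentially the same route as the paper. You remove $\hat b$ with $\alpha=\hat b$ and $\hat s$ with $\rho=\hat s$ at $t=1$, then use the residual $\operatorname{U}(n)$ to diagonalise $X$ and rotate $a$ within each eigenspace, and your stabiliser computation and invariant-based uniqueness argument make explicit what the paper leaves implicit. One small refinement, shared with the paper's wording: reach $\hat q=1$ by the dilation $t=\sqrt{\hat q}$ in the isotropy group, which sends $\hat q\mapsto\hat q/t^2$ without changing the metric, rather than by rescaling $\reeb$, since a genuine homothety would make the normalised parameters depend on the chosen CR chart.
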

For example, if $X$ is a multiple of the identity then the $\operatorname{U}(n)$ action can be used to make $a=[a_1,0,\dots,0]$, where $a_1$ is real and non-negative.

\begin{lem}
    Let $(N,\reeb)$ be a spherical Sasakian structure with $\reeb$ given by parameters $X$, $a$, and $r$ that are normalised as in Proposition \ref{prop2}. The isotropy subgroup at $0$ of the induced Bochner--K\"{a}hler structure is 
    isomorphic to
    \[
    \{U\in \operatorname{U}(n)\,|\, U^{-1}XU=X,\,U^{-1}a=a\}.
    \]
\end{lem}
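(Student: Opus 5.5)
The plan is to identify the isotropy group of the Bochner--K\"{a}hler germ at $0$ with the centraliser of $\reeb$ inside the CR isotropy group \eqref{eqn: CR isotropy representation}, and then compute that centraliser directly. As noted in Section \ref{sec: moduli and symmetries}, a CR automorphism of the Heisenberg sphere fixing $0$ descends to an automorphism (isometry) of the quotient K\"{a}hler germ exactly when it preserves $\reeb$. Conversely, any K\"{a}hler isometry of the germ fixing the image point lifts to a CR automorphism of $N$ that preserves $\reeb$. This lift is the standard Sasaki--K\"{a}hler correspondence: the lift is determined up to the flow of $\Re\reeb$, and it is normalised by fixing $0$. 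So the isotropy group is the stabiliser of the matrix of $\reeb$ under conjugation by the matrices \eqref{eqn: CR isotropy representation}, modulo elements acting trivially on the leaf space. Those trivial elements are the flow of $\reeb$ itself, which does not fix $0$ since $\reeb(0)\neq 0$, so only the identity survives.

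The computation uses the normalised $\reeb$ with $\hat b=0$, $\hat s=0$, $\hat q=1$, together with the two conjugation formulas already displayed before Proposition \ref{prop2}. A general isotropy element factors as a product of a unipotent factor with parameters $(\alpha,\rho)$ and a factor with parameters $(t,U)$.

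1. In the first display, the $(2,1)$ entry of the conjugated matrix becomes $\hat b-\alpha$. Preservation therefore forces $\alpha=0$. One should double check that the $(t,U)$ factor does not interfere, since it only rescales $b$.
2. In the second display, the $(3,1)$ entry becomes $1/t^2$, so $t^2=1$. Since $t$ is real and enters the action only through $t^2$ and $tU$, this allows $t=\pm1$.
3. The $(1,1)$ entry becomes $-\hat s+\rho/t^2$, so $\rho=0$.
4. The middle block and the last column then require $U^{-1}XU=X$ and $tU^{-1}a=a$. The $(1,3)$ entry $r$ is then preserved automatically.

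The main subtlety I expect is the sign $t=-1$, together with the projective nature of the representation. The matrices \eqref{eqn: CR isotropy representation} act projectively, so $(t,U)$ and $(-t,-U)$ induce the same map on $\mathbb{P}(C)$. Concretely, $z\mapsto tUz/1$ and $w\mapsto t^2w$ show that $t=-1$ with $U$ gives the same transformation as $t=1$ with $-U$. Absorbing the sign into $U$ leaves the conditions $U^{-1}XU=X$ and $U^{-1}a=a$.

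The map $U\mapsto(z\mapsto Uz)$ is then an injective homomorphism onto the isotropy group of the K\"{a}hler germ. Injectivity holds because it acts faithfully on $z$, which proves the claimed isomorphism. The remaining thing to verify carefully is that this projective identification is exactly the one used in the paper's representation with $c=\I\hat s$, so that no further scalar matrices sneak in.
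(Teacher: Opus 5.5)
Your proposal is correct and follows the same route as the paper. The paper's entire proof is the observation that the stated group parameterises the stabiliser of $\reeb$ inside the CR isotropy group \eqref{eqn: CR isotropy representation}. You supply the details left implicit there: the lift of K\"{a}hler isometries to $\reeb$-preserving CR automorphisms fixing $0$, and the entry-by-entry computation forcing $\alpha=0$, $t^2=1$, $\rho=0$, $U^{-1}XU=X$ and $tU^{-1}a=a$.

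The two loose ends you flag are harmless. First, $\operatorname{diag}(1,tU,t^2)$ with $(t,U)=(-1,U)$ is literally the same matrix as with $(1,-U)$, so the sign absorbs into $U$. Second, a scalar shift cannot sneak in, because the $(1,1)$ and $(3,3)$ entries of the conjugated matrix ($\rho$ and $-\rho$) would have to equal a common scalar, which forces $\rho=0$ and no shift.
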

\begin{proof}
    This matrix group exactly parameterises the subgroup of the CR isotropy group \eqref{eqn: CR isotropy representation} preserving $\reeb$.
\end{proof}

It was shown in \cite{MR3833790} that the parameters $X$, $a$, and $r$ in \eqref{Reeb VF normalised} are related to the trace terms of the rigid normal form as follows:

\begin{align*}
F_{22}&= -2 \langle X z,z\rangle\langle z,z\rangle\\
\tr F_{22}&= -2(\tr X \langle z,z\rangle+(n+2)\langle X z,z\rangle)\\
\tr^2 F_{22}&= -4(n+1) \tr X\\
F_{23}&= -2 \langle a,z \rangle \langle z,z\rangle^2\\
\tr^2 F_{23}&= -4 (n+2)(n+1) \langle a,z\rangle\\
F_{33}&= -\frac{2}{3}r \langle z,z\rangle^3 + 4 \langle X z,z\rangle^2\langle z,z\rangle+ 2\langle X^2 z,z\rangle \langle z,z\rangle^2 \\
\tr^3F_{33}&= -4(n+2)(n+1)n r +12 (n+2)(n+3)\tr X^2  + 24(n+2) (\tr X)^2,
\end{align*}
where $\tr$ denotes the \emph{trace} differential operator defined with respect to the standard hermitian product, i.e.,  $\tr := \sum_{j=1}^n \frac{\partial^2}{\partial z_j\partial \bar{z}_j}$.

\section{Complete Description of Bochner--K\"{a}hler potentials}\label{sec: potentials thm proof}

In this section, we prove the first main theorem.
\begin{theorem}\label{thm: general potentials}
    A real-valued function $F$ on a complex manifold $M$ is a K\"{a}hler potential of a Bochner--K\"{a}hler structure at a point $p\in M$ if and only if
    \begin{equation}\label{eqn: potential}
        \begin{bmatrix} 0& z^* & \frac{\I}{2} \end{bmatrix} \e^{2\I AF}\begin{bmatrix} 1\\ z\\ 0 \end{bmatrix}=0
    \end{equation}
    in some coordinates $z=(z_1,\ldots,z_n)$ centered at $p$, for some matrix $A\in \mathfrak{gl}_{n+2}(\mathbb{C})$ of the form 
    \begin{equation}\label{eqn: potential parameters}   
    A=\begin{bmatrix}
        0& -2\I a^* & r\\0&\I X& a\\1&0&0
    \end{bmatrix},
    \end{equation}
    where 
    \begin{itemize}
        \item $X\in \mathfrak{gl}_{n}(\mathbb{R})$ is diagonal with entries ordered such that $X_{j,j}\geq X_{j+1,j+1}$,
        \item $a$ is a vector with non-negative real entries satisfying $a_{j+1}=0$ whenever  $X_{j,j}=X_{j+1,j+1}$, and
        \item $r$ is a real number.
    \end{itemize}

    Moreover, such a matrix $A$ is uniquely determined by the germ at $p$ of the Bochner--K\"{a}hler structure being represented. 
\end{theorem}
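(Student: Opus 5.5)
The plan is to realise the Bochner--K\"{a}hler germ, through Proposition \ref{BK manifold to sphere correspondence}, as the leaf space of a spherical Sasakian germ. We then write down explicit \emph{rigid} coordinates on the projectivised null cone $\mathbb{P}(C)$, adapted to the Reeb field. In these coordinates the defining equation $\Im w=F(z)$ becomes exactly \eqref{eqn: potential}.

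\textbf{Step 1: the Reeb field as a matrix.} Let $F$ be a Bochner--K\"{a}hler potential near $p$. By Proposition \ref{BK manifold to sphere correspondence}, the rigid hypersurface $\Im w=F(z)$ is spherical, with Reeb field $\partial/\partial w$. Section \ref{sec: moduli and symmetries} then gives a germ of CR equivalence with the Heisenberg sphere \eqref{Heisenberg sphere}. After applying an isotropy transformation \eqref{eqn: CR isotropy representation} and rescaling, this equivalence carries the Reeb field to \eqref{Reeb VF normalised}. The parameters $(X,a,r)$ can be normalised as in Proposition \ref{prop2}.

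In the matrix picture \eqref{indef su rep}, with $\hat b=0$, $\hat s=0$, $\hat q=1$ and $c=0$, this vector field corresponds to the matrix $A$ of \eqref{eqn: potential parameters}. This $A$ is skew-Hermitian with respect to the form $H$ of \eqref{eqn: indef su rep h-form}, that is, $A^*H=-HA$. I will check the sign and orientation conventions, namely that the holomorphic field \eqref{Reeb VF normalised} is induced on the chart $\psi\neq0$ by the linear field $v\mapsto Av$. This is a routine computation of the linear flow in affine coordinates.

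\textbf{Step 2: rigid coordinates and the potential equation.} Define
\[
\Phi(z,w)=\big[\e^{wA}(1,z,0)^T\big]\in\mathbb{CP}^{n+1}.
\]
Since $\Phi(z,0)=[1:z:0]$ and $\partial_w\Phi$ is the image of $A(1,0,0)^T=(0,0,1)^T$ at $z=0$, the map $\Phi$ is a local biholomorphism near $0$. By construction $\Phi_*(\partial/\partial w)$ is the vector field of $A$, so the pullback of $\mathbb{P}(C)$ is invariant under real translations in $w$. It therefore has the rigid form $\Im w=F(z)$.

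Because $\e^{uA}$ preserves $H$ for real $u$, the condition for $(z,u+\I F)$ to lie on the sphere reduces to $\langle \e^{\I FA}\xi,\e^{\I FA}\xi\rangle=0$, where $\xi=(1,z,0)^T$. Using $(\e^{\I FA})^*H=\e^{-\I FA^*}H=H\e^{\I FA}$, this becomes
\[
\xi^*H\e^{2\I FA}\xi=0 .
\]
Since $\xi^*H=[0\;\,z^*\;\,\tfrac{\I}{2}]$, this is exactly \eqref{eqn: potential}. The implicit function theorem guarantees a unique solution $F$ near $z=0$ with $F(0)=0$: at $z=0$ the left side equals $-F+O(F^2)$.

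It remains to compare this $F$ with the original potential. Both $\Im w=F$ and the original hypersurface are rigid models of the same Sasakian germ, so they are related by a map preserving $\partial/\partial w$. Such a map has the form $w\mapsto w+g(z)$, $z\mapsto \phi(z)$. This changes the potential only by a pluriharmonic term, which does not change $\omega$, together with a holomorphic change of the $z$ coordinates. This yields the required coordinates. Conversely, if $F$ solves \eqref{eqn: potential}, the same computation shows that $\Im w=F$ is the pullback of the sphere by $\Phi$. It is therefore spherical and rigid, so Proposition \ref{BK manifold to sphere correspondence} shows that $\tfrac{\I}{2}\partial\bar\partial F$ is Bochner--K\"{a}hler.

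\textbf{Step 3: uniqueness.} The K\"{a}hler germ determines the Sasakian germ $(N,\reeb)$ up to CR equivalence. Indeed, two potentials for the same metric differ by $2\Re h$ with $h$ holomorphic, and the substitution $w\mapsto w+2\I h$ makes the corresponding rigid hypersurfaces equivalent. Germ equivalences of the spherical structure fixing $0$ act on $\reeb$ through the isotropy group \eqref{eqn: CR isotropy representation}. Proposition \ref{prop2} then says that the normalised triple $(X,a,r)$, and hence $A$, is a complete invariant of the orbit, so $A$ is uniquely determined.

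\textbf{Expected main obstacle.} I expect the delicate point to be Step 1's bookkeeping. One must check that a potential given in arbitrary coordinates yields a rigid spherical hypersurface whose normalising equivalence can be taken to intertwine the Reeb fields exactly, and not only up to scaling or a shift by $\hat s$. One must also get the sign conventions relating \eqref{Reeb VF normalised} to $A$ right. I would settle the latter by directly computing the linear flow in the affine chart $\psi\neq 0$.
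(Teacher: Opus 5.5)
Your proposal is correct and is essentially the paper's proof: the paper also gets the rigid coordinates as $\e^{Aw}(1,z,0)^T$ by solving $\partial_w(\psi,\zeta,\omega)^T=A(\psi,\zeta,\omega)^T$ with the same initial data, uses $A^*J=-JA$ to turn $\e^{A^*\bar w}J\e^{Aw}$ into $J\e^{2\I Av}$, and then appeals to Proposition~\ref{prop2}; it leaves implicit the converse, the pluriharmonic ambiguity and the uniqueness argument, which you spell out. When you do the sign check you flagged, you will find that the linear field $v\mapsto Av$ induces \eqref{Reeb VF normalised} with $r$ replaced by $-r$; this is a convention slip in the paper itself and is harmless, because $r$ ranges over all of $\mathbb{R}$ and the normalisation of Proposition~\ref{prop2} is carried out on the matrices.
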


Before proving Theorem \ref{thm: general potentials}, we will review special cases of this result that are already known, as Theorem \ref{thm: general potentials} is then obtained with similar arguments. The special case we will consider is where the Sasakian structure is given by \eqref{Reeb VF} with parameter constraints $\hat{b}=b$, $\hat{s}=s$, $\hat{a}=0$, $\hat{X}=\theta$, and $\hat{r}=0$, that is,
\begin{equation}\label{Reeb VF lower triangular}
\reeb=\left(b+(s\id +\I \theta)z\right) \frac{\partial}{\partial z} + \left(1+ 2\I \langle z,b\rangle + 2sw\right)\frac{\partial}{\partial w}.
\end{equation}
Or, to express $\reeb$ in the matrix representation of \eqref{indef su rep} (with convention $c=\I \hat{s}$), in homogeneous coordinates $z=\frac{\phi}{\psi}$ and $w=\frac{\omega}{\psi}$ this becomes 
\[
\reeb = 
\begin{bmatrix} \frac{\partial}{\partial \psi}& \frac{\partial}{\partial \phi} & \frac{\partial}{\partial \omega} \end{bmatrix}\begin{bmatrix} 0& 0 & 0\\ b & s+\I \theta & 0 \\ 1& 2\I b^* &2s \end{bmatrix}\begin{bmatrix} \psi\\\phi\\\omega 
\end{bmatrix}
\]
with $b\in\mathbb C^n$, $s\in\mathbb R$, and $\theta$ Hermitian.
\begin{remark}\label{rem: parameter conventions}
    The special case \eqref{Reeb VF lower triangular} is not in the normal form \eqref{Reeb VF normalised}. Applying Section \ref{sec: moduli and symmetries} normalisations transforms \eqref{Reeb VF lower triangular} to the normal form \eqref{Reeb VF normalised} with 
    \begin{equation}\label{eqn: stanton to general normal coordinates}
    X= \theta-2 bb^*-\langle b,b\rangle \id,
    \quad
    a= (\I \theta -s\id-2\I \langle b,b\rangle\id)b,
    \quad
    r=s^2 -2b^*\theta b +3 \langle b,b\rangle^2.
    \end{equation} 
    
    Rigid CR structures in the form \eqref{Reeb VF lower triangular} are analysed in \cites{MR3833790,St91}, and the parameters in \eqref{Reeb VF lower triangular} directly match the parameters used there, except that the label $r$ is used instead of $s$ in both articles. 
\end{remark} 

Explicit expressions of the potentials of the Bochner--K\"{a}hler manifolds, or equally, the defining equations of rigid spheres can be found by applying normalisations to a Heisenberg sphere that map a given the Sasakian structure defining symmetry $\reeb$ of the Heisenberg sphere to $\frac{\partial}{\partial u}$.

Stanton \cite{St91} produced the corresponding (inverse) mappings $Z=Z(z,w)$, $W=W(z,w)$ for the special case of \eqref{Reeb VF lower triangular} with $n=1$. Ezhov, Kol\'a\v{r} and Schmalz \cite{MR3833790} generalised this to arbitrary dimensions. The mappings satisfy the system of linear ODE
\begin{align*}
    \frac{\partial Z}{\partial w}&=b+(s+\I \theta)Z\\
    \frac{\partial W}{\partial w}&=1+2\I \langle Z,b\rangle +2s W
\end{align*}
with $b,s,\theta$ as in \eqref{Reeb VF lower triangular},  subject to the initial conditions
\[
Z(z,0)\equiv \frac{z}{1-2\I \langle z,b\rangle}
\quad\mbox{ and }\quad
W(z,0)\equiv 0.
\]
These initial conditions make sure that the mapping gives a defining equation in rigid normal form,  which means, in particular, that its Taylor series does not contain pluriharmonic terms. The solution is

\begin{align*}
Z(z,w)=& R^{-1}(\e^{Rw}-\id)b + \frac{\e^{Rw}z}{1-2\I\langle z,b\rangle}\\
W(z,w)=& (1-2\I \langle R^{-1}b, b\rangle)\frac{\e^{2sw}-1}{2s} + 2\I \e^{2sw}\langle \tilde{R}^{-1}(\id-\e^{-\tilde{R}w})(\frac{z}{1-2\I\langle z,b \rangle}+R^{-1}b),b \rangle\\
=& \frac{\e^{2sw}-1}{2s} -2\I \left\langle \left[\frac{\e^{2sw}-1}{2s}- (2s-R)^{-1}(\e^{2sw}-\e^{Rw}) \right] R^{-1}b,b\right\rangle \\ &+ \frac{2\I\e^{2sw}}{1-2\I\langle z,b \rangle} \langle \tilde{R}^{-1}(\id-\e^{-\tilde{R}w})z,b \rangle,
\end{align*}
for matrices $R=s \id+\I\theta$ and $\tilde{R}=s\id -\I\theta$, and the resulting (implicit) equations
\begin{multline}\label{eqn: defining eqn in Stanton coordinates}
\frac{\sin 2sv}{2s}(1-2\langle \theta b,R^{-1}\tilde{R}^{-1} b\rangle)= \frac{\langle \e^{-2\theta v}z,z\rangle}{|1-2\I \langle z,b \rangle|^2} + \langle(\e^{-2\theta v}-\cos 2sv) b , R^{-1}\tilde{R}^{-1}b \rangle\\+ \frac{\langle  (\e^{-2 \theta v}-\e^{2\I sv})z, R^{-1}b\rangle}{1-2\I \langle z,b \rangle}+
\frac{\langle R^{-1}b, (\e^{-2 \theta v}-\e^{2\I sv})z \rangle}{1+2\I \langle b,z \rangle}
\end{multline}
realise an open subset of all rigid spheres. We derive \eqref{eqn: defining eqn in Stanton coordinates} in Section \ref{sec: Examples}.

Notice that the reduction of the vector field $\mathcal Z$ to the form \eqref{Reeb VF normalised} is always possible, while the reduction to the form \eqref{Reeb VF lower triangular} is not. If the latter reduction can be achieved it is preferable because it produces a simpler triangular structure. Section \ref{Stanton coordinates and the a=0 structures.} presents a structure that cannot be reduced to \eqref{Reeb VF lower triangular}.

We approach the general case of Theorem \ref{thm: general potentials} with a similar analysis, but our starting point is the general normal form of \eqref{Reeb VF normalised} rather than the special case of \eqref{Reeb VF lower triangular}.

\begin{proof}[Proof of Theorem \ref{thm: general potentials}]
We would like to find a change of coordinates on the Heisenberg sphere that transforms $\mathcal{Z}$ given by parameters $a$, $X$, and $r$ in \eqref{Reeb VF normalised} into $\frac{\partial}{\partial u}$. In contrast to the former special case \eqref{Reeb VF lower triangular}, the corresponding system of ODE's is not linear any more if considered in affine coordinates, but it is linear in projective coordinates: 
$$\frac{\partial}{\partial w} \begin{bmatrix} \psi\\\zeta\\\omega \end{bmatrix} = \begin{bmatrix} 0& -2\I a^* & r\\0& \I X & a\\ 1& 0& 0
\end{bmatrix} \begin{bmatrix} \psi\\\zeta\\\omega \end{bmatrix},$$
subject to the initial conditions $\psi(z,0)=1$, $\zeta(z,0)=z$, $\omega(z,0)=0$.
It follows
$$ \begin{bmatrix} \psi\\\zeta\\\omega \end{bmatrix}= \e^{Aw} \begin{bmatrix} 1\\z \\0 \end{bmatrix}$$
where
$$A=\begin{bmatrix} 0& -2\I a^* & r\\ 0 & \I X & a \\ 1& 0 &0 \end{bmatrix},$$
resulting in the rigid sphere equation
\[
\begin{bmatrix} 1& z^* &0\end{bmatrix} \e^{A^*\bar{w}}J\e^{Aw}\begin{bmatrix} 1\\z\\ 0 \end{bmatrix}=
\begin{bmatrix} \psi^*& \zeta^* &\omega^* \end{bmatrix}\begin{bmatrix} 0&0&\frac{\I}{2}\\0& \id & 0\\-\frac{\I}{2}&0&0 \end{bmatrix}\begin{bmatrix} \psi\\ \zeta\\ \omega \end{bmatrix}=0,
\]
where 
$$J=\begin{bmatrix} 0&0&\frac{\I}{2}\\0& \id & 0\\-\frac{\I}{2}&0&0 \end{bmatrix}.$$
Since $A^*J=-JA$, we have
$$\e^{A^*\bar{w}}J= \sum_{n=0}^\infty \frac{(A^*)^nJ \bar{w}^n}{n!}=\sum_{n=0}^\infty \frac{JA^n (-\bar{w})^n}{n!}=J\e^{-A\bar{w}}.$$

Applying the Baker--Campbell--Hausdorff formula, we get the implicit formula
\[
\begin{bmatrix} 0& z^* & \frac{\I}{2} \end{bmatrix} \e^{2\I Av}\begin{bmatrix} 1\\ z\\ 0 \end{bmatrix}=0
\]
for a general Bochner--K\"{a}hler potential $v$. Combining this with Proposition \ref{prop2} completes the proof of Theorem \ref{thm: general potentials}.
\end{proof}

\section{Examples, Old and New}\label{sec: Examples}

In this section we present several examples of Bochner--K\"{a}hler structures, distinguished by constraints on the moduli space parameters $(a,X,r)$ from Theorem \ref{thm: general potentials}. We also express some examples in the parameters $(b,\theta, s)$ with the \emph{generalised Stanton formula}, which are related to $(a,X,r)$ as described in Remark \ref{rem: parameter conventions}. Many of these examples are well known structures, and our aim is to give their realisations in the $(a,X,r)$ space. Section \ref{sec: new closed form examples} presents a family of completely new Bochner--K\"{a}hler structures with explicit formulas for their K\"{a}hler potentials.

\subsection{Homogeneous structures and the general \texorpdfstring{$a=0$}{a=0} formula}\label{sec: a is 0 case}
Consider the Bochner--K\"{a}hler structures given by the defining equation in Theorem \ref{thm: general potentials} with $a=0$. This class contains most of the Bochner--K\"{a}hler structures that have been explicitly described in the current literature. In particular, it contains all homogeneous Bochner--K\"{a}hler structures (see Corollary \ref{cor: homogeneous structures}), the weighted projective spaces studied in \cite{MR1824987}, and the \emph{rotationally invariant} structures studied in \cite{MR0266121}.

With $a=0$, the defining equation \eqref{eqn: potential} simplifies significantly. Indeed, if
\[
A=\begin{bmatrix} 0& 0 & r\\ 0 & \I X & 0 \\ 1& 0 &0 \end{bmatrix} 
\]
then the matrix exponential simplifies to 
\[
\e^{2\I AF}=
\begin{bmatrix} \mu_{1,1}& 0 & \mu_{1,2}\\ 0 & \e^{-2 FX} & 0 \\ \mu_{2,1}& 0 & \mu_{2,2} \end{bmatrix} 
\quad\mbox{ where }\quad
\operatorname{exp}\left({2\I F \begin{bmatrix} 0& r\\ 1 & 0 \end{bmatrix} }\right)=:
\begin{bmatrix} \mu_{1,1}& \mu_{1,2}\\ \mu_{2,1} & \mu_{2,2} \end{bmatrix},
\]
and since $\left(2\I F\begin{bmatrix} 0& r\\ 1 & 0 \end{bmatrix}\right)^2=-4F^2 r\id$, we have
\[
\operatorname{exp}\left(2\I F\begin{bmatrix} 0& r\\ 1 & 0 \end{bmatrix} \right)=\left(\sum_{i=0}^\infty \frac{(2 F \sqrt{-r})^{2i}}{(2 i)!}\right)\id + \I\left(\sum_{i=0}^\infty \frac{(2 F \sqrt{-r})^{2i+1}}{(2 i+1)!}\right) \begin{bmatrix} 0& -\sqrt{-r}\\ \frac{1}{\sqrt{-r}} & 0 \end{bmatrix}
\]
for all $r\neq0$.
Therefore,
\[
\e^{2\I AF} = 
\begin{bmatrix} \cosh(2F\sqrt{-r})& 0 & -\I\sqrt{-r}\sinh(2F\sqrt{-r})\\ 0 & \e^{-2 FX} & 0 \\ \frac{\I\sinh(2F\sqrt{-r})}{\sqrt{-r}}& 0 &\cosh(2F\sqrt{-r}) \end{bmatrix} 
\]
in this special case with $r\neq0$. So, supposing that $X$ is normalised as in Theorem \ref{thm: general potentials}, namely, in the form
\begin{equation}\label{eqn: normalised X}
X=\begin{bmatrix} \tau_1& 0 & 0\\ 0 & \ddots & 0 \\ 0& 0 &\tau_n \end{bmatrix} ,
\end{equation}
the defining equation \eqref{eqn: potential} for the K\"{a}hler potential $F$ becomes
\begin{equation}\label{eqn: examples branch 1a}
\frac{\sinh(2F\sqrt{-r})}{2\sqrt{-r}}=\sum_{i=1}^n\e^{-2 F \tau_i}|z_i|^2
\qquad\forall\, r\neq0.
\end{equation}
The chosen branch of square root does not change these formulas, so \eqref{eqn: examples branch 1a} is well posed despite the ambiguity in $\sqrt{-r}$.

If $r=0$ the computation is simpler and results in 
\begin{equation}\label{eqn: examples branch r=0}
F=\sum_{i=1}^n\e^{-2 F \tau_i}|z_i|^2,
\end{equation}
which could also be found as a limit of \eqref{eqn: examples branch 1a}. The complete Bochner--K\"{a}hler structures described in \cite{MR1824987}*{Theorem 4.27} have the form \eqref{eqn: examples branch r=0}. Also, we can alternatively rearrange \eqref{eqn: examples branch 1a} to
\begin{equation}\label{eqn: examples branch 1b}
\frac{\sin(2F\sqrt{r})}{2\sqrt{r}}=\sum_{i=1}^n\e^{-2 F \tau_i}|z_i|^2.
\end{equation}
The K\"{a}hler potential $F$ is described most elegantly by \eqref{eqn: examples branch 1a}, \eqref{eqn: examples branch r=0}, or \eqref{eqn: examples branch 1b} depending on whether the parameter $r$ is negative, zero, or positive, respectively.

Consider the case \eqref{eqn: examples branch 1a} 
 $$\frac{\sinh \tau_0 F}{\tau_0}=\sum_{j=1}^{n} \e^{\tau_j F}  |z_j|^2,$$
 where $\tau_0=2\sqrt{-r}>0$.  After rescaling of $z$, this implicit equation is equivalent to
 \begin{equation}\label{ie}
  \e^{\rho_0F}+\sum_{j=1}^{n} \e^{\rho_j F}  |z_j|^2=1
  \end{equation}
  where $\rho_j=\tau_j-\tau_0$, for $j>0$, and  $\rho_0=-2\tau_0<0$. 
  The implicit equation \eqref{ie} has a solution in a neighbourhood of $z=0$. This can be shown by the implicit function theorem after replacing $|z_j|^2$ by new variables $u_j$. It follows that the solution depends only on $u_j=|z_j|^2$.
   
We derive an implicit formula for the corresponding metric. Differentiation yields 
 $$\rho_0 \e^{\rho_0 F} F_{\alpha} +  \e^{\rho_\alpha  F}  \overline{z_\alpha} +F_\alpha \sum_{j=1}^{n} \rho_j \e^{\rho_j  F}  |z_j|^2 =0,$$
 where $F_\alpha=\frac{\partial F}{\partial z_{\alpha}}$.
 Let
 $$m_1=\rho_0 \e^{\rho_0 F}  + \sum_{j=1}^{n} \rho_j \e^{\rho_j  F}  |z_j|^2 $$
 Then
 $$F_\alpha=\frac{ - \e^{\rho_\alpha  F}  \overline{z_\alpha}}{m_1 }
 \quad\text{ and }\quad
 F_{\bar{\beta}}=\frac{  -\e^{\rho_\beta  F}  z_{\bar{\beta}}}{m_1 }.$$
 
Differentiating again yields
\begin{multline*}
 \rho_0 \e^{\rho_0 F} F_{\alpha\bar{\beta}} + \rho_0^2 \e^{\rho_0 F} F_{\alpha}F_{\bar{\beta}}+  \e^{\rho_\alpha  F}  \delta_{\alpha\bar{\beta}} +\rho_{\alpha}\e^{\rho_\alpha  F}  \overline{z_\alpha} F_{\bar{\beta}}\\ + F_{\alpha\bar{\beta}} \sum_{j=1}^{n} \rho_j \e^{\rho_j  F}  |z_j|^2 + F_\alpha F_{\bar{\beta}} \sum_{j=1}^n \rho^2_j \e^{\rho_j  F}  |z_j|^2+F_\alpha \rho_\beta \e^{\rho_\beta F} z_{\bar{\beta}}=0
 \end{multline*}
 and hence
$$   m_1 F_{\alpha\bar{\beta}} =   -\e^{\rho_\alpha  F}  \delta_{\alpha\bar{\beta}} -  m_2 F_{\alpha}F_{\bar{\beta}}-\rho_{\alpha}\e^{\rho_\alpha  F}  \overline{z_\alpha} F_{\bar{\beta}} -\rho_\beta \e^{\rho_\beta F} z_{\bar{\beta}}  F_\alpha$$
where
\[
m_2=  \rho_0^2 \e^{\rho_0 F}  +\sum_{j=1}^{n} \rho^2_j \e^{\rho_j  F} |z_j|^2.
\]
It follows
\[
F_{\alpha\bar{\beta}}= \frac{  -\e^{\rho_\alpha  F} \delta_{\alpha\bar{\beta}} }{ m_1} +\frac{((\rho_\alpha+\rho_\beta)m_1-m_2) \e^{(\rho_\alpha+\rho_\beta)  F} \overline{z_\alpha}z_{\bar{\beta}}}{ m_1^3}.
\]

\medskip

The following subsections describe more specialised cases in further detail.

\subsubsection{Homogeneous structures}\label{sec: homogeneous structures}
In Corollary \ref{cor: homogeneous structures} (\S\ref{sec: ISA} below), we show that the Bochner--K\"{a}hler structures expressed in normal coordinates $(a,X,r)$ are homogeneous if and only if 
\begin{equation}\label{eqn: homogeneous parameters}
a=0,
\quad
X = \tau\begin{bmatrix}
    \id_{n_1} & 0 \\ 0 & -\id_{n-n_1}
\end{bmatrix},
\quad\mbox{ and }\quad 
r = -\tau^2
\quad\mbox{ for some }\tau\geq 0,\, n_1\in\mathbb{N}
\end{equation}
where $\id_{n}$ denotes an $n\times n$ identity matrix. 
In this case, \eqref{eqn: examples branch 1a} becomes
\[
\frac{\e^{2F\tau}-\e^{-2F\tau}}{4\tau}=\frac{\sinh(2F\tau)}{2\tau}=\sum_{i=1}^{n_1}\e^{-2 F \tau}|z_i|^2 + \sum_{i=n_1}^{n}\e^{2 F \tau}|z_i|^2
\qquad\forall\, r\neq0,
\]
which rearranges to
\[
\e^{4F\tau}\left(1-4\tau\sum_{i=n_1}^{n}|z_i|^2\right)=1+4\tau\sum_{i=1}^{n_1}|z_i|^2,
\]
or equivalently
\begin{equation}\label{eqn: examples branch homogeneous}
F = \frac{1}{4\tau}\left( \ln\left(1+4\tau\sum_{i=1}^{n_1}|z_i|^2\right) -\ln\left(1-4\tau\sum_{i=n_1+1}^{n}|z_i|^2\right)  \right).
\end{equation}

The Fubini--Study and Bergman metrics appear as the special cases when $n_1=n$ and $n_1=0$ respectively.

\subsubsection{Rotationally invariant structures}

The \emph{rotationally invariant solutions}, meaning  K\"{a}hler potentials depending only on $\langle z,z\rangle$ rather than the individual coordinates, correspond to the case $a=0$, $X=\tau \id$ where $\tau$ is a real parameter, and $r$ is unconstrained, i.e., any real number. Potentials of rotationally invariant Bochner-K\"{a}hler structures are characterised in \cite{MR0266121} as solutions to certain differential equations, and we can directly show these $(a,X,r)$ constraints produce precisely the solutions to that equation.

Applying \eqref{eqn: examples branch 1b}, they have the form
\[
\frac{\sin 2\sqrt{r}F}{2\sqrt{r}}=\e^{-2\tau F}\langle z,z\rangle,
\]
that is
\begin{equation}\label{eqn: rotationally inv ex a}
\frac{1}{4\I \sqrt{r}} \e^{2(\tau+\I \sqrt{r}) F} - \frac{1}{4\I \sqrt{r}}\e^{2(\tau- \I \sqrt{r}) F} =\langle z,z\rangle,
\end{equation}
where $F$ is the K\"{a}hler potential, now expressed as a function of $t:=\langle z,z\rangle$. And to shorten formulas, let us write $s:=\sqrt{r}$ and $F^\prime = \tfrac{\partial }{\partial t}$ for the rest of this example, but we must stress this parameter $s$ can now be either real or purely imaginary (in contrast to the rest of this paper).

Repeatedly differentiating \eqref{eqn: rotationally inv ex a} yields
\begin{equation}
    \frac{1}{4\I s} \e^{2(\tau+\I s) F} - \frac{1}{4\I s}\e^{2(\tau-\I s) F} = t,
\end{equation}
\begin{equation}
    \frac{\tau+\I s}{2\I s} \e^{2(\tau+\I s) F} F^{\prime}- \frac{\tau-\I s}{2\I s}\e^{2(\tau-\I s) F}F^{\prime} = 1,
\end{equation}
and 
\begin{equation}
\begin{aligned}
    \frac{(\tau+\I s)^2}{\I s} \e^{2(\tau+\I s) F} (F^{\prime})^2- \frac{(\tau-\I s)^2}{\I s}\e^{2(\tau-\I s) F}(F^{\prime})^2 = & \frac{\tau-\I s}{2\I s}\e^{2(\tau-\I s) F}F^{\prime\prime} +\\
    & -\frac{\tau+\I s}{2\I s} \e^{2(\tau+\I s) F} F^{\prime\prime}.
\end{aligned}
\end{equation}

It follows that
\[
\left(\frac{(\tau+\I s)^2}{\I s} \e^{2(\tau+\I s) F}- \frac{(\tau-\I s)^2}{\I s}\e^{2(\tau-\I s) F}\right) (F^{\prime})^2 =-\frac{F^{\prime\prime}}{F^{\prime}},
\]
and the left side can be simplified by
\begin{equation}
\begin{aligned}
\frac{(\tau+\I s)^2}{\I s} \e^{2(\tau+\I s) F}- \frac{(\tau-\I s)^2}{\I s}\e^{2(\tau-\I s) F}= & 4(\tau^2+r) \left(\frac{1}{4\I s}\e^{2(\tau+\I s) F} - \frac{1}{4\I s}\e^{2(\tau-\I s) F}\right) \\
& +4\tau \left(\frac{\tau+\operatorname{i}s}{2\operatorname{i}s} \e^{2(\tau+\operatorname{i}s) F}- \frac{\tau-\operatorname{i}s}{2\operatorname{i}s}\e^{2(\tau-\operatorname{i}s) F}\right) \\
=&  -4(\tau^2+s^2)t +4\tau \frac{1}{F^{\prime}}. 
\end{aligned}
\end{equation}
This yields 
\[
4\left(\tau \frac{1}{F^{\prime}}-(\tau^2+r)t\right)(F^{\prime})^2=-\frac {F^{\prime\prime}}{F^{\prime}},
\]
which is the Tachibana--Liu differential equation (see \cite{MR0266121})
\[
F^{\prime\prime}=-4(-(\tau^2+r)tF^{\prime}+ \tau)(F^{\prime})^2= (atF^{\prime}+ k)(F^{\prime})^2,
\]
where these new $a$ and $k$ variables are matching notation in \cite{MR0266121}.

 \subsubsection{Generalised weighted projective space structures}
We return to the K\"{a}hler--Bochner metrics with potentials of the form  \eqref{eqn: examples branch 1a}, and will show that the metric
 $$\sum_{\alpha\bar{\beta}}F_{\alpha \bar{\beta}} dz_{\alpha} d\bar{z}_{\bar{\beta}}$$
 is invariant with respect to the action 
 \begin{equation}\label{act}
 (z_1,\dots,z_n) \to (\lambda_1z_1,\dots, \lambda_nz_n),
 \end{equation}
 where $\lambda_j=\e^{\frac{(\rho_j-\rho_0)c}{2}}$ for an arbitrary real constant $c$.

 Notice, that the action $z_j\to \lambda_j z_j$ in the terms $\e^{\rho_j  F}  |z_j|^2$ can be compensated by subtracting the constant $c$ from the potential $F$. However, such a change of $F$ also changes the term $\e^{\rho_0F}$, showing that the potential $F$ itself is not invariant with respect to that action.

We also have $dz_j\to \lambda_j dz_j$ and, therefore,
 $$dz_\alpha d\bar{z}_{\bar{\beta}} \to \lambda_\alpha \lambda_{\bar{\beta}}  dz_\alpha d\bar{z}_{\bar{\beta}}. $$

Hence, we need to show that 
$$F_{\alpha\bar{\beta}} \to \frac{F_{\alpha\bar{\beta}}}{{\lambda_\alpha \lambda_{\bar{\beta}}}}= \e^{\frac{-(\rho_\alpha+\rho_\beta-2\rho_0)c}{2}} F_{\alpha\bar{\beta}}.$$

We have $m_1\to \e^{-\rho_0 c}m_1$. Therefore, the first summand in $F_{\alpha\bar{\beta}}$ transforms as
$$ \frac{  \e^{-\rho_\alpha  F} \delta_{\alpha\bar{\beta}} }{ m_1} \to   \frac{  \e^{-\rho_\alpha  F} \delta_{\alpha\bar{\beta}} }{ \e^{-\rho_0  c} m_1},$$
as required, taking into account $\alpha=\bar{\beta}$.

For the second summand, we use  $m_2\to \e^{-\rho_1 c}m_2$, resulting in

\begin{multline}
  \frac{(m_2-(\rho_\alpha+\rho_\beta)w_1) \e^{-(\rho_\alpha+\rho_\beta)  F} \overline{z_\alpha}z_{\bar{\beta}}}{ m_1^3} \to \\ \frac{ \e^{-\rho_0 c}(m_2-(\rho_\alpha+\rho_\beta)m_1) \e^{-(\rho_\alpha+\rho_\beta)  F}  \e^{\frac{(\rho_\alpha+\rho_\beta-2\rho_0)c}{2}}\overline{z_\alpha}z_{\bar{\beta}}}{ \e^{-3\rho_0 c} m_1^3}.  
\end{multline}

This shows that the metric is invariant with respect to the action \eqref{act} in addition to the invariance with respect to $z_j\to \e^{\I \omega_j} z_j$ for $\omega_j\in \mathbb R$. 

If $\tau_j\ge 0$ then $\rho_j-\rho_0=\tau_j+2\tau_0>0$ and the invariance with respect to \eqref{act} can be used to extend the metric from a neighbourhood of $0$ to $\mathbb C^n$.

The following case is particularly interesting. Assume that the ratios of the $\rho_j$ are rational, that is $\rho_j=\mu_j \sigma$, where the $\mu_j$ are integers. Then by setting $V=\e^{\sigma F}$ the equation \eqref{ie} becomes
$$V^{\mu_0}+\sum_{j=1}^n  |z_j|^2 V^{\mu_j}=1,$$
that is, a rational equation on $V$, which is equivalent to a polynomial equation. If $V$ is a local positive real branch of a solution then the potential becomes
$$F=\frac{1}{\sigma} \ln V.$$

For $\tau_j=-2\tau_0=\tau$ we obtain
$$F=\frac{1}{\tau} \ln (1+\tau \sum_{j=1}^n |z_j|^2|),$$
which is the Fubiny--Study metric for $\tau>0$, and the Bergman metric for $\tau<0$.

We end up with especially nice cases whenever $r<0$ if the eigenvalues $\tau_i$ of $X$ are all rational multiples of $\tau_0$.
If $\tau_j\ge0$, this gives Bryant's description of weighted projective space structures in \cite{MR1824987}*{\S 4.4.6}. Notice that the function $s$ in \cite{MR1824987}*{\S 4.4.6} is nothing but $\e^{\rho_0 F}$, where $F$ is a K\"{a}hler potential.

\subsection{Generalised Stanton formula}
Without the previous section's $a=0$ assumption, the general defining equations become much more complicated. An open subset of the moduli space where $a\neq0$ generically can be realised, however, as rigid spheres given by the implicit generalised Stanton formula \eqref{eqn: defining eqn in Stanton coordinates}, which is expressed in terms of the parameters $(b, \theta, s)$ discussed in Section \ref{sec: potentials thm proof}. Recall that $(b, \theta, s)$, however, are neither fully normalised (as different parameter values can describe equivalent structures) nor fully general (as some Bochner-K\"{a}hler potentials cannot be described in terms of these parameters), and their relationship to the completely normalised parameters $(a,X,r)$ is summarised in Remark \ref{rem: parameter conventions}. They are useful, on the other hand, because potentials expressible in the $(b,\theta,s)$ coordinates are given by the generalised Stanton formula \eqref{eqn: defining eqn in Stanton coordinates}. For the remainder of Section \ref{sec: Examples}, we discuss new examples contained within \eqref{eqn: defining eqn in Stanton coordinates}.

\subsubsection{Stanton coordinates and the \texorpdfstring{$a=0$}{a=0} structures.}\label{Stanton coordinates and the a=0 structures.}
The class of $a=0$ structures whose general formulas are already derived in Section \ref{sec: a is 0 case} provides a nice setting to observe that the $(b,\theta,s)$ parameters cannot cover all Bochner--K\"{a}hler structures. In this section, we use the relationship between $(b,\theta,s)$ and the canonical $(a,X,r)$ parameters of Remark \ref{rem: parameter conventions} to describe the $(b,\theta,s)$ values corresponding to $a=0$ structures. We will find that not all $(X,r)$ possibilities can be obtained.

 If $a=0$ then 
\begin{equation}\label{eqn: a is zero in stanton coor}
0=(\I \theta -s\id-2\I \langle b,b\rangle\id)b.
\end{equation}
Normalised $(a,X,r)$ values with $a=0$ and $r\geq 0$ are all realised in the $(b,\theta,s)$ parameters by $(b,\theta,s) = (0,X,\sqrt{r})$.
But the solutions to \eqref{eqn: a is zero in stanton coor} with $b=0$ produce $r=s^2\geq 0$, so these cannot realise $(a,X,r)$ values with $a=0$ and $r<0$. Hence, to realise other $(X,r)$ pairs, we must find solutions with $b\neq 0$.

By multiplying \eqref{eqn: a is zero in stanton coor} by $b^*$ from the left, we get
\[
\I (b^*\theta b -2\langle b,b\rangle b^*b) -sb^*b=0.
\]
It follows $s=0$ because $\theta$ is hermitian and $s\in \mathbb{R}$, and hence
\[
\theta b= 2\langle b,b\rangle b.
\]
That is, $b$ is an eigenvector of $\theta$ with eigenvalue $2\langle b,b\rangle$. By applying the standard normalisation of Hermitian matrices, we may assume that $\theta$ is diagonal  and $b$ is a column vector in the form $(\beta,0,\ldots, 0)$ for some $\beta\in \mathbb{C}$. The resulting $X$ and $r$ are
$$X=\theta-2bb^*-\langle b,b\rangle \id=\begin{bmatrix} -|\beta|^2& 0& \dots & 0\\0&\theta_2-|\beta|^2&\dots & 0\\\vdots& \vdots& \dots &0\\ 0&0&\dots& \theta_n-|\beta|^2 \end{bmatrix}$$
and 
\[
r=s^2-2b^*\theta b+ 3 \langle b,b\rangle= -|\beta|^2<0,
\]
where $\theta_j$ are diagonal entries in $\theta$ (and hence $\theta_1=2\langle b,b\rangle$), which achieves negative $r$. The only restriction on $X$ is the presence of a negative eigenvalue, equal to $r$. This restriction is invariant under the minor normalisation we applied to assume $\theta$ is diagonal, because that is achieved via a $\operatorname{GL}(\mathbb{C}^n)$ action that induces natural hermitian forms transformations of $X$. 

The corresponding Stanton formula is
$$
    \frac{1-\e^{-4|\beta|^2v}}{4|\beta|^2}=\frac{\langle \e^{-2\theta v}z,z\rangle}{|1-2\I z_1\bar{\beta}|^2}+\frac{(\e^{-4|\beta|^2v}-1) (\I(z_1\bar{\beta}-\bar{z_1}\beta)-4|z_1|^2|\beta|^2)}{2|\beta|^2|1-2\I z_1\bar{\beta}|^2}
$$
which further simplifies to
$$\frac{1-\e^{-4|\beta|^2v}}{4|\beta|^2}(1-4|\beta|^2|z_1|^2)=\frac{4|\beta^2|\langle \e^{-2\theta v}z,z\rangle }{4|\beta|^2}$$
\[
\frac{\e^{2|\beta|^2v}-\e^{-2|\beta|^2v}}{4|\beta|^2}=\frac{\sinh(2|\beta|^2v)}{2|\beta|^2}=\frac{\sinh(2rv)}{2r}=\langle \e^{-2X v}z,z\rangle,
\]
matching \eqref{eqn: examples branch 1a}.

Since this is a general solution of all $(b,\theta,s)$ realising $(a,X,r)$ with $a=0$ and $r<0$, comparing with Section \ref{sec: homogeneous structures} provides a nice proof of the following lemma.
\begin{lem}
The Bergman metrics, which have potentials of the form
\[
v =F(z,\bar{z})= -\frac{1}{4\tau}\ln\left(1-4\tau\sum_{i=1}^{n}|z_i|^2\right),
\]
cannot be realised by the generalised Stanton formula \eqref{eqn: defining eqn in Stanton coordinates}.
\end{lem}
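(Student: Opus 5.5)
The plan is to argue by contradiction through the uniqueness statement of Theorem \ref{thm: general potentials}. First I identify the normalised parameters $(a,X,r)$ of the Bergman metric. Comparing the potential in the lemma with \eqref{eqn: examples branch homogeneous} in the case $n_1=0$ shows that the Bergman metric corresponds to
\[
a=0,\qquad X=-\tau\,\id,\qquad r=-\tau^2 .
\]
More precisely, it is the homogeneous structure \eqref{eqn: homogeneous parameters} with $n_1=0$ and $\tau>0$. The sign convention must be kept consistent with \eqref{eqn: examples branch 1a}, where the right-hand side is $\sum_i\e^{2F\tau}|z_i|^2$. By the uniqueness part of Theorem \ref{thm: general potentials}, any realisation of this germ, by any formula whatsoever, must produce exactly these normalised parameters.

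Next, suppose the Bergman metric were given by the generalised Stanton formula \eqref{eqn: defining eqn in Stanton coordinates} for some $(b,\theta,s)$. By Remark \ref{rem: parameter conventions}, the map \eqref{eqn: stanton to general normal coordinates} would then send $(b,\theta,s)$ to the Bergman parameters above, possibly after the residual $\operatorname{U}(n)$ normalisation. That normalisation does not change the spectrum of $X$, the value of $r$, or the vanishing of $a$. So $a=0$ and $r<0$ are both forced.

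The preceding analysis of \eqref{eqn: a is zero in stanton coor} gives the general solution in this situation. The case $b=0$ yields $r=s^2\ge 0$, which is excluded. Hence $b\ne0$, and then $s=0$, $b$ is an eigenvector of $\theta$ with eigenvalue $2\langle b,b\rangle$, and after unitary normalisation $b=(\beta,0,\dots,0)$. From this one reads off that $X$ has the distinguished eigenvalue $-|\beta|^2$, and one obtains an explicit expression for $r$ in terms of $|\beta|$. Thus every $a=0$, $r<0$ structure realised by the Stanton formula satisfies a rigid algebraic relation between $r$ and one eigenvalue of $X$, while the remaining eigenvalues $\theta_j-|\beta|^2$ are free.

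The final step is to test this relation against $X=-\tau\id$, $r=-\tau^2$. Since all eigenvalues of $X$ equal $-\tau$, the relation must hold with eigenvalue $-\tau$, and the aim is to derive a contradiction. This is the step I expect to be the main obstacle, and I would first recompute $r=s^2-2b^*\theta b+3\langle b,b\rangle^2$ very carefully. Substituting $s=0$, $b^*\theta b=2|\beta|^4$ and $\langle b,b\rangle^2=|\beta|^4$ gives $-|\beta|^4$, not the $-|\beta|^2$ printed in the text. With $-|\beta|^4$, the relation becomes: the eigenvalue $-|\beta|^2$ equals $-\sqrt{-r}$. For the Bergman data this reads $-\tau=-\sqrt{\tau^2}$, which is consistent rather than contradictory.

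So the bare eigenvalue bookkeeping may not suffice, and I must look for a finer obstruction. The options I would check, in order, are:

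\begin{enumerate}
\item the precise normalisation of $\reeb$, namely $\hat q=1$ and the scale, which fixes $\tau$ and not merely its orbit;
\item the overall factor $(1-2\langle\theta b,R^{-1}\tilde R^{-1}b\rangle)$ in \eqref{eqn: defining eqn in Stanton coordinates}, which degenerates when $s=0$ and $\theta b=2|\beta|^2b$;
\item whether the transformation of Remark \ref{rem: parameter conventions} is actually defined at such parameters.
\end{enumerate}

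If item 2 shows that the generalised Stanton formula is degenerate or undefined exactly on this locus, that gives the desired non-realisability.

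As a cross-check, I would also directly compare the simplified Stanton identity $\sinh(\cdot)/(\cdot)=\langle\e^{-2Xv}z,z\rangle$ derived above with the Bergman equation in \eqref{eqn: examples branch homogeneous}, matching the coefficients of $|z_1|^2$ and of $|z_j|^2$ for $j\ge2$. This isolates exactly which parameter identity fails.
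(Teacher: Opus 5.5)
Your recomputation $r=-|\beta|^4$ is correct, and it is exactly where the paper's own argument breaks. The paper's proof is the eigenvalue comparison you set up. It uses the misprinted value $r=-|\beta|^2$ (from $3\langle b,b\rangle$ instead of $3\langle b,b\rangle^2$) to conclude that $X$ must have a negative eigenvalue \emph{equal to} $r$. It then compares with $X=-\tau\id$, $r=-\tau^2$, which even at face value leaves $\tau=1$ unexcluded. With the correct value, the constraint is only that $X$ has the eigenvalue $-\sqrt{-r}$, and the Bergman data satisfy it.

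The genuine gap in your proposal is that it stops there: the last step is a list of places to look, and none of them gives an obstruction. Item~1 fails because the conjugation behind Remark~\ref{rem: parameter conventions} uses $t=1$, so $\hat q=1$ and the scale are preserved, and the scale is matched by $|\beta|^2=\tau$. Item~3 fails because that map is conjugation by an isotropy element and is defined for all $(b,\theta,s)$. Item~2 fails as well. The factor $1-2\langle\theta b,R^{-1}\tilde R^{-1}b\rangle$ does vanish, but it vanishes on the whole locus $s=0$, $\theta b=2|\beta|^2b$, i.e.\ for every Stanton realisation of an $a=0$, $r<0$ structure, so it cannot single out the Bergman metric. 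Nor does it make the equation degenerate: the $v$-dependence is then carried by $\langle(\e^{-2\theta v}-1)b,R^{-1}\tilde R^{-1}b\rangle=(\e^{-4|\beta|^2v}-1)/(4|\beta|^2)$, and the simplification sketched in the text gives
\[
\frac{1-\e^{-4|\beta|^2v}}{4|\beta|^2}=|z_1|^2+\sum_{j\ge 2}\e^{-2\theta_j v}|z_j|^2 .
\]

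Your proposed cross-check therefore refutes the statement rather than proving it. Take $b=\sqrt{\tau}\,e_1$ (with $e_1$ the first standard basis vector), $\theta=2bb^*=\operatorname{diag}(2\tau,0,\dots,0)$ and $s=0$. Then \eqref{eqn: stanton to general normal coordinates} gives $(a,X,r)=(0,-\tau\id,-\tau^2)$, which are the Bergman parameters. The displayed equation becomes $(1-\e^{-4\tau v})/(4\tau)=\sum_j|z_j|^2$, that is, exactly $v=-\frac{1}{4\tau}\ln\bigl(1-4\tau\sum_j|z_j|^2\bigr)$. So no finer obstruction exists, and you should not expect to complete the argument along these lines.

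The only reading under which the lemma survives is a literal one. Any realisation forces $s=0$ and $|b|^2=\tau$ (from $r=-|b|^4=-\tau^2$), and then $\theta=2bb^*$ (from $X=\theta-2bb^*-|b|^2\id=-\tau\id$). This $\theta$ has rank one, so for $n\ge2$ the matrices $R=s\id+\I\theta$ and $\tilde R$ are singular, and $R^{-1}$, $\tilde R^{-1}$ in \eqref{eqn: defining eqn in Stanton coordinates} do not exist as matrices. That is a statement about notation rather than genuine non-realisability, because these inverses only ever act on $b$, where $\theta$ is invertible. If that is what you intend to prove, you must say so and argue it explicitly; otherwise the claim is false as stated.
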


\subsubsection{New closed form examples}\label{sec: new closed form examples}

For the cases with $b\neq 0$, $\theta=0$, and $s=0$, the potentials are defined by setting $\theta=0$ in \eqref{eqn: defining eqn in Stanton coordinates} and taking the limit as $s$ tends to $0$, yielding
\[
0 = 2\langle b,b\rangle v^2 -\left(1+\frac{2\I \langle z,b\rangle}{1-2\I\langle z,b\rangle}-  \frac{2\I \langle b,z\rangle}{1+2\I\langle b,z\rangle}\right)v+\frac{\langle z,z\rangle}{|1+2\I\langle b,z\rangle|^2}
\] 
or, equivalently, 
\[
2|1+2\I\langle b,z\rangle|^2 \langle b,b\rangle v^2 - (1-4|\langle z,b\rangle|^2) v+ \langle z,z\rangle.
\]
Hence, the potential $v$ can be explicitly described as
\[
v=\frac{1-4|\langle z,b\rangle|^2 - \sqrt{(1-4|\langle z,b\rangle|^2)^2- 8 |1+2\I\langle b,z\rangle|^2 \langle b,b\rangle  \langle z,z\rangle}}{4|1+2\I\langle b,z\rangle|^2\langle b,b\rangle
}
\]
for any $b\neq 0$. A unique solution is determined because only one of the two solutions to this quadratic equation satisfies $v(0)=0$.

The corresponding $a,X,r$ parameters are
\begin{equation}
X= \langle b,b\rangle -2bb^*,
\quad
a=-2\I \langle b,b\rangle b,
\quad\text{ and }\quad
r=-3\langle b,b\rangle^2.
\end{equation}

The infinitesimal symmetries are vector fields of the form
\[
\begin{aligned}
    X = & (\beta+(s+\I H)z+\alpha w+2\I \langle z, \alpha \rangle z + \rho zw) \frac{\partial}{\partial z}+ \\
    & + (2\I \langle z,\beta\rangle + 2sw+ 2\I \langle z,\alpha \rangle w+ \rho w^2)\frac{\partial}{\partial w}
\end{aligned}
\]
satisfying
\begin{equation}
\left[X, \\b \frac{\partial}{\partial z}+ (1+ 2\I \langle z,b\rangle )\frac{\partial}{\partial w}\right]=0
\end{equation}
or, equivalently,
\[
\begin{aligned}
0=&\left[\begin{bmatrix}0 & -2\I \alpha^* &-\rho \\ \beta & \I H +s &  \alpha\\ 0 & 2\I \beta^* & 2s  \end{bmatrix}, \begin{bmatrix}0 & 0 &0 \\ b & 0 &  0\\ 1 & 2\I b^* & 0  \end{bmatrix}\right]\\
=&\begin{bmatrix}-2\I \langle b,\alpha\rangle-\rho & -2\I\rho b^*&0\\ (s+\I H)b+\alpha & 2\I \alpha b^* -2\I b\alpha^* &  \rho b\\ 2s+ 2\I \beta^*b-2\I b^* \beta  & 4\I sb^*- 2\I \alpha^*+2\I b^*(s+\I H)& -2\I \langle \alpha,b\rangle  \end{bmatrix}.
\end{aligned}
\]

Since $b\neq 0$, it immediately follows that $\rho=0$. Furthermore,
$$ \langle \alpha,b\rangle=0.$$
Now, multiplying both sides by $b$ in
\[
0=2\I \alpha b^* -2\I b\alpha^*
\]
gives
$$2\I \alpha \langle b,b\rangle  -2\I b\langle b,\alpha\rangle=0$$
and hence 
$$\alpha=0.$$
Since $\I H$ is skew-hermitian and has imaginary eigenvalues, the eigenvalue equation
$$\I Hb=-sb$$
can only hold with $Hb=0$ and $s=0$. This gives an $(n-1)^2$ parametric family of symmetries that correspond to an $U(n-1)$ rotation. 
Finally,
$$2s+ 2\I \beta^*b-2\I b^* \beta =0$$
implies
$$\Im \langle \beta,b\rangle=0.$$
This gives a complex $n^2$-dimensional family of infinitesimal automorphisms
$$(\beta+\I Hz) \frac{\partial}{\partial z}+ 2\I \langle z,\beta\rangle \frac{\partial}{\partial w}$$
with $\beta\perp_{\mathbb{R}} \I b$ and $Hb=0$.

\section{Infinitesimal Symmetry Algebras}\label{sec: ISA}

In this section we prove Theorem 2 on symmetries of Bochner--K\"{a}hler manifolds, and as a consequence, classify the Bochner--K\"{a}hler manifolds according to their symmetries.
\begin{theorem}\label{thm: infinitesimal symmetries}
Let $M$ be the (local) Bochner--K\"{a}hler manifold associated with the parameters $a$, $X$, and $r$ of Theorem \ref{thm: general potentials}. The infinitesimal automorphisms of $M$ correspond to the vector fields
\[(\beta+\I \hBlock z+\alpha w+2\I \langle z,\alpha\rangle z+\rho zw)\frac{\partial}{\partial z}+(2\I \langle z,\beta \rangle + 2\I \langle z,\alpha\rangle w+\rho w^2)\frac{\partial}{\partial w},\]
where 
\[
    \alpha=\I X \beta 
    \quad\mbox{ and }\quad\rho =2\Im \langle \beta,a\rangle
\]
and the Hermitian matrix $\hBlock$ and $\beta\in\mathbb{C}^n$ satisfy 
\[
    [\hBlock,X]=-2\I(a\beta^*+\beta a^*)
    \quad\mbox{ and }\quad
    \I \hBlock a=(X^2+r)\beta.
\]
\end{theorem}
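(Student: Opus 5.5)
The plan is to use the correspondence of Section \ref{sec: moduli and symmetries}. An infinitesimal automorphism of the local Bochner--K\"{a}hler manifold $M$ is an infinitesimal CR automorphism $Y$ of the Heisenberg sphere \eqref{Heisenberg sphere} that commutes with the Reeb field $\reeb$ of \eqref{Reeb VF normalised}, taken modulo $\mathbb{R}\reeb$. Such a $Y$ preserves the foliation by $\Re\reeb$ together with the CR structure and Levi form, and so descends to a holomorphic Killing field on the leaf space. Conversely, a Killing field on $M$ lifts to the Sasakian manifold, and by Proposition \ref{BK manifold to sphere correspondence} this lift is a CR symmetry of the sphere, hence an element of $\mathfrak{su}(n+1,1)$.

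The task is therefore to compute the centraliser of $\reeb$ in $\mathfrak{su}(n+1,1)$ modulo $\mathbb{R}\reeb$. We work in the matrix representation \eqref{indef su rep}, where $\reeb$ corresponds to the matrix $A$ of \eqref{eqn: potential parameters}. A general symmetry $Y$ is written with parameters $(\hat q,\beta,\alpha,\hat s,H,\rho')$ in the slots of \eqref{indef su rep}. Subtracting the real multiple $\hat q\reeb$ normalises $\hat q=0$, and this fixes the representative of each class modulo $\mathbb{R}\reeb$.

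Next we write out $[Y,A]=0$ block by block, remembering that the trace adjustment $c$ only contributes multiples of the identity, which do not affect commutators. The lowest block entries give the first relations.
\begin{itemize}
\item The $(3,1)$ entry gives $2\hat s=0$, so $\hat s=0$.
\item The $(2,1)$ entry gives $\alpha=\I X\beta$.
\item The $(1,1)$ and $(3,3)$ entries express the top-right corner $\rho'$ (which becomes the $\rho$ of the $zw$ and $w^2$ coefficients) in terms of $\langle\beta,a\rangle$. Reality of $\rho$ together with the diagonal consistency yields $\rho=2\Im\langle\beta,a\rangle$.
\item The $(3,2)$ entry is the conjugate-transpose partner of the $(2,1)$ entry under the skew-Hermitian symmetry, so it is automatically consistent once $\alpha=\I X\beta$.
\end{itemize}

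The genuinely new equations come from the remaining entries.
\begin{itemize}
\item The central $(2,2)$ block produces $\I[H,X]$ plus terms of the form $\beta\cdot(-2\I a^*)$ and $a\cdot(2\I\beta^*)$ together with their reverses. This gives $[H,X]=-2\I(a\beta^*+\beta a^*)$.
\item The $(2,3)$ entry gives $\I Ha - \I X\alpha + r\beta - \rho' a$-type terms. Substituting $\alpha=\I X\beta$, and using the $\rho$ relation to cancel the $a$-components, yields $\I Ha=(X^2+r)\beta$.
\item The $(1,2)$ and $(1,3)$ entries are conjugates of already treated ones, or follow from them, by skew-Hermitianity with respect to \eqref{eqn: indef su rep h-form}.
\end{itemize}
Translating the matrix back to a quadratic vector field via the dictionary between \eqref{indef su rep} and \eqref{Reeb VF} then gives exactly the stated form, with $\hat s=0$ and no $\partial/\partial w$ constant term.

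I expect the main obstacle to be bookkeeping rather than ideas. The sign and normalisation conventions linking the corner entry $\rho'$ to the coefficient $\rho$ must be tracked carefully. So must the fact that the $(1,1)$ and $(3,3)$ equations involve $\langle\beta,a\rangle$ and its conjugate. The real part of this relation should be forced to vanish (or be absorbed) while the imaginary part determines $\rho$, and I would check this on the explicit $b\ne0$, $\theta=s=0$ example of Section \ref{sec: new closed form examples} as a sanity test. A secondary point to justify is that every Killing field of $M$ lifts to a CR symmetry commuting with $\reeb$; I would argue this by lifting to the Sasakian cone structure, where isometries preserving the complex structure and Kähler form correspond to CR maps commuting with the Reeb flow.
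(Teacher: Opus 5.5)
Your route is the paper's: compute the centraliser of $\reeb$ in the matrix model \eqref{indef su rep}, normalise $\hat q=0$ modulo $\reeb$, and read off $[Y,A]=0$ entry by entry. The gap is in the one non-routine step of the paper's proof, which concerns the scalar entries. Under $J$-skewness, $(1,2)$ is indeed the partner of $(2,3)$. But $(1,1)$ and $(3,3)$ are partners of each other, and $(1,3)$ is its own partner. So skew-Hermitianity only makes $[Y,A]_{13}$ real; it does not make it vanish. Concretely:
\begin{itemize}
\item $[Y,A]_{11}=\rho+2\I\langle\beta,a\rangle=0$ with $\rho$ real gives $\rho=2\Im\langle\beta,a\rangle$ \emph{and} $\Re\langle\beta,a\rangle=0$;
\item $[Y,A]_{13}=-4\Im\langle\alpha,a\rangle-2r\hat s=0$ gives $\Im\langle\alpha,a\rangle=0$, i.e.\ $\Re\langle X\beta,a\rangle=0$.
\end{itemize}
These are the paper's conditions \eqref{aut4}, and they do not appear in the theorem. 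Until you show they follow from the listed equations, you have only proved that every symmetry satisfies the listed equations. You have not proved that every solution $(\hBlock,\beta)$ gives a symmetry. Your ``forced to vanish (or be absorbed)'' leaves exactly this open, and a sanity check on one example would not settle it.

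The missing idea is that both scalar conditions follow from the $(2,2)$ block, $[\hBlock,X]=-2\I(a\beta^*+\beta a^*)$. The paper reads off the diagonal of this equation in the frame where $X$ is diagonal. There the diagonal of $[\hBlock,X]$ vanishes, so $\Re(\bar a_j\beta_j)=0$ for every $j$, whence $\Re\langle\beta,a\rangle=\Re\langle X\beta,a\rangle=0$. Invariantly, you can apply $\tr$ and $\tr(X\,\cdot\,)$ to that equation and use $\tr[\hBlock,X]=\tr(X[\hBlock,X])=0$. Equivalently, the identities $\tr[Y,A]=0$ and $\tr([Y,A]A)=0$ show that $\Im[Y,A]_{11}$ and $[Y,A]_{13}$ vanish once the other entries do.

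Separately, your $(2,3)$ step is misdescribed. That entry contains no $\rho$ term; it is $r\beta+\I\hBlock a-\hat s a-\I X\alpha$. With $\hat s=0$ and $\alpha=\I X\beta$ it gives $-\I\hBlock a=(X^2+r)\beta$, which is what the paper's own computation \eqref{aut2} yields. The sign in the displayed statement disagrees with this, so you should not claim to derive $\I\hBlock a=(X^2+r)\beta$ by cancelling terms that are not there.
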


\begin{proof}
The infinitesimal automorphisms of the Sasakian manifold (and hence the K\"{a}hler manifold) are the vector fields in $\mathfrak{su}(n+1,1)$ that commute with $\reeb$, that is, the centraliser of $\reeb$ in $\mathfrak{su}(n+1,1)$. Applying the normalisations of Section \ref{sec: moduli and symmetries}, we will work in coordinates where $\reeb$ is given by \eqref{Reeb VF normalised} with parameters $a$, $X$, and $r$, normalised as in Proposition \ref{prop2}

Using the matrix representation \eqref{indef su rep}  (with the convention $c=\I S$) of $\mathfrak{su}(n+1,1)$, $\reeb$ is represented by the matrix \eqref{eqn: potential parameters}.  We will use labels $\beta,\alpha\in\mathbb C^n$, $\hBlock$ - hermitian, and $\rho,s\in \mathbb R$, comprising the matrix
\begin{equation}\label{indef su rep symmetry}
\begin{bmatrix}
0 & -2\I \alpha^* & \rho \\ 
\beta & \I \hBlock +\sigma &  \alpha\\ 
0 & 2\I \beta^* & 2\sigma
\end{bmatrix},
\end{equation}
for components of a general infinitesimal symmetry modulo $\operatorname{span}\{\reeb\}$. So $(a,X,r)$ parameterises the possible K\"{a}hler structures while $(\alpha,\beta,\hBlock, \sigma, \rho)$ parameterises relevant CR symmetries.

The centraliser of $\reeb$ is given by
\begin{multline*}
0=\left[\begin{bmatrix}0 & -2\I \alpha^* &\rho \\ \beta & \I \hBlock +\sigma &  \alpha\\ 0 & 2\I \beta^* & 2\sigma  \end{bmatrix}, \begin{bmatrix}0 & -2\I a^* &r \\ 0 & \I X &  a\\ 1 & 0 & 0  \end{bmatrix}\right]=\\\begin{bmatrix}+\rho+2\I \langle \beta,a\rangle & 2 \alpha^*X-2a^* \hBlock+2\I (\sigma a^*- r\beta^*)&-2\I\langle a,\alpha\rangle +2\I\langle \alpha,a\rangle-2r\sigma\\ \alpha-\I X\beta & [X,\hBlock]-2\I \beta a^*-2\I a\beta^* &  r\beta-\sigma a+\I \hBlock a-\I X\alpha\\ 2\sigma & -2 \beta^*X+ 2\I \alpha^*& -\rho+2\I \langle a,\beta\rangle  \end{bmatrix},
\end{multline*}
which yields
\begin{equation}\label{aut1}
[\hBlock,X]=-2\I (a\beta^*+\beta a^*),
\end{equation}
\begin{equation}\label{aut2}
-\I \hBlock a=(X^2+r\id)\beta,
\end{equation}
\begin{equation}\label{aut3}
\sigma=0,
\quad
\alpha= \I X\beta,
\quad
\rho=2\Im \langle \beta,a\rangle,
\end{equation}
\begin{equation}\label{aut4}
\Re\langle \beta,a\rangle=0
\quad\text{ and }\quad
\Im \langle \alpha, a\rangle=0.
\end{equation}
In addition to $\sigma=0$, from \eqref{aut3} both $\alpha$ and $\rho$ are uniquely determined by $\beta$ and the given data $X,a$.

Next, we analyse equation \eqref{aut1}.  Given the Proposition \ref{prop2} normalisation, $X$ is diagonal with real entries $x_1\dots,x_n$.  Accordingly,
\[
[\hBlock,X]=
\begin{bmatrix} 
0& (x_2-x_1)h_{12} & \cdots & (x_n-x_1) h_{1n}\\ 
(x_1-x_2) \overline{h_{12}}& 0& \cdots &(x_n-x_2)h_{2n}\\ 
\vdots &\vdots& \ddots & \vdots \\ 
(x_1-x_n) \overline{h_{1n}}& (x_2-x_n) \overline{h_{2n}}& \cdots & 0 
\end{bmatrix}
\]
which equals
\[
-2\I (a \beta^*+ \beta a^*)=
-2\I \begin{bmatrix} a_1 (\beta_1+\bar{\beta}_1)& a_1 \bar{\beta}_2+a_2\beta_1&\cdots& a_1 \bar{\beta}_n+a_n\beta_1 \\ a_2 \bar{\beta}_1+a_1\beta_2& a_2 (\beta_2+\bar{\beta}_2)& \cdots & a_2 \bar{\beta}_n+a_n\beta_2\\\vdots & \vdots & \ddots & \vdots\\ a_n \bar{\beta}_1+a_1\beta_n& a_n \bar{\beta}_2+a_2\beta_n& \cdots& a_n (\beta_n+\bar{\beta}_n)
\end{bmatrix}.
\]

The entries on the main diagonal yield
\begin{equation}\label{aut7}
a_j(\beta_j+\bar{\beta}_j)=0
\end{equation}
for all $j$.  Hence, either $a_j=0$ or $\beta_j$ is purely imaginary. Accordingly $\Re \langle X\beta, a\rangle=\Re \langle X\beta, a\rangle=0$, from which \eqref{aut3} implies the other condition in \eqref{aut4}. Thus conditions \eqref{aut1} through \eqref{aut3} alone are sufficient for the CR symmetry to preserve $\reeb$. 
\end{proof}

\begin{cor}\label{cor: homogeneous structures}
    A Bochner--K\"{a}hler manifold $M$ is locally homogeneous if and only if the parameter $a=0$ and the eigenvalues $\tau$ of $X$ satisfy $\tau^2=-r$. 
    
    For $\tau=0$, we get the flat K\"{a}hler metric on $\mathbb C^n$.
    
    For $\tau\neq 0$, the locally defined manifold extends to a Cartesian product of a $\mathbb CP^{n_1}$ with Fubini--Study metric and a ball $\mathbb B^{n_2}$ with Bergman metric, with potential
    \begin{equation}\label{eqn: T3 defining funciton}
    v=\frac{1}{\tau}\log (1+\tau \langle z',z'\rangle)- \frac{1}{\tau}\log (1-\tau \langle z'',z''\rangle)
    \end{equation}
    where $n_1+n_2=n$ and $\tau>0$ is a real parameter. These K\"{a}hler manifolds are globally homogeneous. In this case the parameter $\beta\in \mathbb C^n$ is arbitrary and $\hBlock$ consists of two arbitrary hermitian diagonal blocks of dimensions $n_1\times n_1$ and $n_2\times n_2$, respectively, resulting in a symmetry group of dimension $2n+n_1^2+n_2^2$.  
\end{cor}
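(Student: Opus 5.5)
The plan is to read local homogeneity off Theorem \ref{thm: infinitesimal symmetries}. The germ is locally homogeneous exactly when the evaluation map from the symmetry algebra to $T_0M$ is onto. Modulo $\reeb$, the value of a symmetry at the origin is its $\frac{\partial}{\partial z}$-component at $z=0,w=0$, which is $\beta$. The terms $\alpha$, $\hBlock$, $\rho$ all vanish at the origin, and the $\frac{\partial}{\partial w}$-component there is $0$. So local homogeneity is equivalent to the following: for every $\beta\in\mathbb{C}^n$ there is a Hermitian $\hBlock$ with
\[
[\hBlock,X]=-2\I(a\beta^*+\beta a^*)\quad\text{and}\quad \I\hBlock a=(X^2+r)\beta.
\]

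\textbf{Necessity.} First suppose $a\neq0$.
\begin{itemize}
\item The diagonal of the first equation gives $a_j(\beta_j+\bar\beta_j)=0$, as in \eqref{aut7}.
\item Choose $j$ with $a_j>0$ and $\beta=e_j$. This violates the diagonal condition, so no $\hBlock$ exists.
\end{itemize}
Hence $a=0$. The second equation then reads $(X^2+r)\beta=0$ for all $\beta$, so $X^2=-r\id$. That is exactly the condition $\tau^2=-r$ for every eigenvalue $\tau$ of $X$.

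\textbf{Sufficiency.} If $a=0$ and $X^2=-r\id$, then both equations reduce to $[\hBlock,X]=0$, and $\beta$ is free. The algebra is therefore $\mathbb{C}^n$ (for $\beta$) plus the centraliser of $X$ in the Hermitian matrices. Since $X=\tau\operatorname{diag}(\id_{n_1},-\id_{n_2})$ with $\tau\ge0$ after normalisation, this centraliser consists of block-diagonal Hermitian matrices with blocks of sizes $n_1$ and $n_2$ (all of them when $\tau=0$). This gives dimension $2n+n_1^2+n_2^2$.

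\textbf{Potentials.} Substitute $a=0$, $X$ as above and $r=-\tau^2$ into \eqref{eqn: examples branch 1a}, or into \eqref{eqn: examples branch r=0} when $\tau=0$, which gives $F=\langle z,z\rangle$, the flat metric. This reproduces \eqref{eqn: examples branch homogeneous}, a sum of a Fubini--Study potential in $z'$ and a Bergman potential in $z''$. After rescaling $z$ and $\tau$ it takes the form \eqref{eqn: T3 defining funciton}. The sum structure makes the metric a Riemannian product, and it extends globally to $\mathbb{CP}^{n_1}\times\mathbb{B}^{n_2}$. Global homogeneity follows from the transitive isometry groups of the two factors.

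The main obstacle is justifying that transitivity of the symmetry algebra at the origin (surjectivity onto $T_0M$) is equivalent to local homogeneity. This uses the standard fact that a Lie algebra of Killing fields spanning the tangent space at a point spans it nearby, so its local orbits are open. The rest is bookkeeping with the constants in the potential.
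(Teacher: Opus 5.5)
Your proposal is correct and follows essentially the same route as the paper: local homogeneity is identified with the symmetry algebra spanning $T_0M$, i.e.\ with $\beta$ being unrestricted in Theorem~\ref{thm: infinitesimal symmetries}. This forces $a=0$ and $X^2=-r\id$, and the potential is then read off from the homogeneous-case formula of \S\ref{sec: homogeneous structures}. Your diagonal-entry argument with $\beta=e_j$ and your explicit centraliser count spell out what the paper's short proof only asserts. Note that for $\tau=0$ the count is $2n+n^2$, which is the case $n_1=n$.
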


\begin{proof}
For local homogeneity the infinitesimal automorphisms must span the tangent space at the reference point. This occurs if and only if there is no restriction on $\beta$, that is, if and only if $a=0$ and $X^2=-r \id$. Regarding \eqref{eqn: T3 defining funciton}, it coincides with \eqref{eqn: examples branch homogeneous}, derived in  \S\ref{sec: homogeneous structures}.
\end{proof}

The solutions to equations \eqref{aut1} and \eqref{aut2} depend on the structure of eigenspaces of $X$. In the following, we investigate the resulting options.

\begin{prop}\label{prop: symmetry algebras}
    Let $V_1,\dots,V_{\mu}$ be the eigenspaces with eigenvalues $\tau_1,\dots,\tau_{\mu}$ of $X$. Let $a_{V_k}$ be the components of $a$ with respect to the eigenspace decomposition of $X$. Then,
    \begin{enumerate}
        \item The blocks $\hBlock_{ij}$ of the matrix $\hBlock$ that correspond to different eigenspaces $V_i$ and $V_j$ are uniquely determined by a choice of $\beta$.
        \item If $a_{V_k}\neq 0$ then $\beta_{V_k}=\I \lambda_k a_{V_k}$ where $\lambda_k$ is a real parameter and $H_{kk}$ satisfies $n_k$ equations, which reduces its freedom to $(n_k-1)^2$. This contributes $(n_k-1)^2+1$ parameters.
        \item If $a_{V_k}= 0$ then $H_{kk}$ is unrestricted and $\beta_{V_k}$ is either unrestricted, or vanishing.  
        This contributes  either $n_k^2$, or $n_k^2+2n_k$ parameters.
    \end{enumerate}
\end{prop}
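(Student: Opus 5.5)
We work in the normalised coordinates of Proposition \ref{prop2}, with $X$ block-diagonal along $\mathbb{C}^n=V_1\oplus\dots\oplus V_\mu$ and $X|_{V_k}=\tau_k\id_{n_k}$. Write $\hBlock=(\hBlock_{ij})$ and $\beta=(\beta_{V_1},\dots,\beta_{V_\mu})$ in this decomposition. By Theorem \ref{thm: infinitesimal symmetries} and its proof, the symmetry algebra is parametrised by those $(\beta,\hBlock)$ satisfying \eqref{aut1} and \eqref{aut2}. The quantities $\alpha,\rho,\sigma$ are then determined by \eqref{aut3}, and \eqref{aut4} follows automatically. So the task is to solve these two linear equations block by block.

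For item (1), take the $(i,j)$ block of \eqref{aut1} with $i\neq j$. Since $[\hBlock,X]_{ij}=(\tau_j-\tau_i)\hBlock_{ij}$, we get
\[
\hBlock_{ij}=\frac{-2\I}{\tau_j-\tau_i}\bigl(a_{V_i}\beta_{V_j}^*+\beta_{V_i}a_{V_j}^*\bigr).
\]
This determines $\hBlock_{ij}$ from $\beta$ because $\tau_i\neq\tau_j$. The expression is compatible with Hermitian symmetry, since the right-hand side of \eqref{aut1} is $\I$ times a Hermitian matrix and hence skew-Hermitian, which matches $[\hBlock,X]$.

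For items (2) and (3), first take the diagonal block $(k,k)$ of \eqref{aut1}. There the left side vanishes, so $a_{V_k}\beta_{V_k}^*+\beta_{V_k}a_{V_k}^*=0$.

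If $a_{V_k}\neq0$, apply this rank-$\le2$ Hermitian matrix identity to a vector orthogonal to $a_{V_k}$. This forces $\beta_{V_k}\parallel a_{V_k}$. Applying it to $a_{V_k}$ itself forces the proportionality constant to be purely imaginary, so $\beta_{V_k}=\I\lambda_k a_{V_k}$ with $\lambda_k\in\mathbb{R}$. Next take the $V_k$ component of \eqref{aut2}:
\[
-\I\bigl(\hBlock_{kk}a_{V_k}+\textstyle\sum_{j\neq k}\hBlock_{kj}a_{V_j}\bigr)=(\tau_k^2+r)\beta_{V_k}.
\]
By item (1), the off-diagonal terms are already fixed by $\beta$, so this is $n_k$ complex linear conditions on $\hBlock_{kk}a_{V_k}$, a prescribed vector. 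A Hermitian $\hBlock_{kk}$ with prescribed $\hBlock_{kk}a_{V_k}=c$ exists only if $\langle c,a_{V_k}\rangle$ is real. Granting that consistency, the set of solutions is an affine space modelled on the Hermitian matrices annihilating $a_{V_k}$. This model space is $\mathfrak{u}(n_k-1)$-like and has real dimension $(n_k-1)^2$. Together with $\lambda_k$, the block contributes $(n_k-1)^2+1$ parameters.

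If $a_{V_k}=0$, the diagonal block condition is empty and $\hBlock_{kk}$ does not appear in \eqref{aut2}, so $\hBlock_{kk}$ is free, giving $n_k^2$ parameters. The $V_k$ component of \eqref{aut2} then reads
\[
(\tau_k^2+r)\beta_{V_k}=-\I\sum_{j\neq k}\hBlock_{kj}a_{V_j}.
\]
Substituting item (1) for $\hBlock_{kj}$ turns the right side into a linear expression in $\beta$. When $a_{V_k}=0$, item (1) gives $\hBlock_{kj}=\frac{-2\I}{\tau_j-\tau_k}\beta_{V_k}a_{V_j}^*$. Hence the right side is $-\bigl(\sum_{j\neq k}\frac{2|a_{V_j}|^2}{\tau_j-\tau_k}\bigr)\beta_{V_k}$, a scalar multiple of $\beta_{V_k}$, which decouples nicely. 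The equation therefore becomes $c_k\beta_{V_k}=0$ with
\[
c_k=\tau_k^2+r+\sum_{j\neq k}\frac{2|a_{V_j}|^2}{\tau_j-\tau_k}.
\]
This gives the dichotomy in item (3): $\beta_{V_k}$ is unrestricted, contributing $2n_k$ more real parameters for a total of $n_k^2+2n_k$, if $c_k=0$, and $\beta_{V_k}=0$ otherwise.

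The main obstacle is the consistency check in case (2). We must show that $\langle c,a_{V_k}\rangle$ is real once the off-diagonal blocks and $\beta_{V_j}=\I\lambda_j a_{V_j}$ are substituted, so that the $n_k$ equations are always solvable for a Hermitian $\hBlock_{kk}$ and do not instead impose a constraint on the $\lambda$'s. We would verify this by direct computation. The right side contributes $(\tau_k^2+r)\I\lambda_k|a_{V_k}|^2$, which is imaginary. The off-diagonal terms contribute
\[
\sum_j\frac{-2}{\tau_j-\tau_k}\bigl(\langle a_{V_k},a_{V_k}\rangle\langle\beta_{V_j},a_{V_j}\rangle+\langle\beta_{V_k},a_{V_k}\rangle|a_{V_j}|^2\bigr)\cdot(\text{factors of }\I),
\]
and each $\langle\beta_{V_j},a_{V_j}\rangle$ is imaginary. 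So the total is imaginary, which after the factor $-\I$ gives the required reality. If the cross terms fail to cancel for some configuration, the count of item (2) would drop by one, and that case would have to be recorded separately.
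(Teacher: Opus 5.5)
Your proposal is correct and follows the paper's proof essentially step for step. The off-diagonal blocks of \eqref{aut1} fix $\hBlock_{ij}$, and the diagonal blocks force $\beta_{V_k}=\I\lambda_k a_{V_k}$. The $V_k$-component of \eqref{aut2} then either cuts $\hBlock_{kk}$ down to $(n_k-1)^2$ parameters (when $a_{V_k}\neq 0$) or reduces to a scalar condition $c_k\beta_{V_k}=0$ (when $a_{V_k}=0$).

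Two points in your favour. First, the paper leaves the solvability check implicit, namely that $a_{V_k}^*\hBlock_{kk}a_{V_k}$ comes out real. Your computation settles it, since $\beta_{V_j}^*a_{V_j}$ is imaginary or zero for every $j$, so your closing hedge can be dropped. Second, your $c_k$, with $\tau_j-\tau_k$ in the denominator, is the sign that actually follows from \eqref{aut1} and agrees with Table~\ref{tab: surface moduli space stratification}; the printed \eqref{hjk}--\eqref{suma3} carry the opposite sign.
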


{\bf Proof.} If two eigenvalues $x_i,x_j$ ($i<j$) of $X$ are different, then, by \eqref{aut1}, the entry $h_{ij}$ of the matrix $\hBlock$ is determined by 
\begin{equation}\label{aut8}
(x_j-x_i)h_{ij}=-2\I(a_i \bar{\beta}_j+a_j\beta_i).
\end{equation}
This proves the first statement.

For the proof of the second statement, assume that $x_i=x_j=\tau_k$. Then $h_{ij}$ cannot be determined from equation \eqref{aut1} and 
\[
a_i \bar{\beta}_j+a_j\beta_i=0
\qquad\forall\, i,j.
\]
For some $j$, we have $a_j\neq 0$, implying $\bar{\beta}_j=-\beta_j$, so for any such $j$ the previous equation becomes
\[
\begin{vmatrix} a_i & a_j\\\beta_i& \beta_j\end{vmatrix} = \begin{vmatrix} a_i & a_j\\\beta_i& -\overline{\beta_j}\end{vmatrix}
=0 
\quad\quad\forall\, i.
\]
If $a_{V_k}\neq 0$, this implies that $\beta_{V_k}$ is a multiple of $a_{V_k}$, and since $\bar{\beta}_j=-\beta_j$ it follows that the multiplier is purely imaginary, that is
\[\beta_{V_k}=\I \lambda_k a_{V_k}\]
where $\lambda_k$ is a real parameter.

The hermitian matrix $H_{kk}$ is subject to the equation \eqref{aut2} restricted to $V_k$
\begin{equation}\label{Hkk}
-\I H_{kk}a_{V_k}=\I \sum_{j\neq k} H_{kj}a_{V_j}+(\tau_k^2+r)\beta_{V_k}.  
\end{equation}

If $a_{V_k}\neq 0$, equation \eqref{Hkk} determines one linear combination of the rows of $H_{kk}$ for a choice of $\beta$, thus leaving $(n_k-1)^2$ parameters. 

If $a_{V_k}=0$, there is no restriction on $H_{kk}$ but 
$$\I \sum_{j\neq k} H_{kj}a_{V_j}+(\tau_k^2+r)\beta_{V_k}=0.$$

By \eqref{aut1}, we have

\begin{equation}\label{hjk}
  h_{lj}=-\frac{2\I}{\tau_k-\tau_j}\beta_l a_j
\end{equation}
for all $l \in I_{V_k}$ and $j \notin I_{V_k}$, where $I_{V_k}$ is the index set corresponding to $V_k$.
By \eqref{aut2}, we get

\begin{equation}
 -\I (H a)_l = -\I \sum_{j\notin I_{V_k}}h_{lj}a_j = (\tau_k^2 + r) \beta_l.
\end{equation}

Hence,  by substituting from \eqref{hjk},
\begin{equation} \label{suma2}
-2 \beta_l \sum_{j\notin I_{V_k}} \frac{a_j^2}{\tau_k-\tau_j}  =   (\tau_k^2 + r) \beta_l. 
\end{equation}

It follows that for $\tau_j, a_j$ and $r$ satisfying 
\begin{equation} \label{suma3}
2 \sum_{j\notin I_{V_k}} \frac{a_j^2}{\tau_k-\tau_j}  +  (\tau_k^2 + r) =0,
\end{equation}
$\beta_{V_k} $ can be arbitrary. If \eqref{suma3} does not hold, then $\beta_{V_k} = 0$.

\hfill $\Box$ \medskip

Let $n_k$ be again the multiplicity of the $k$-th eigenvalue of $X$ and let $I_0$ denote the set of indices $j$ such that $a_{V_j} = 0$. Let $I_1$  be the set of indices $j$ such that $a_{V_j} \neq 0$ and $I_S$ the set of indices in $I_0$ such that \eqref{suma2} holds.

By Proposition 3, we obtain the following formula for the dimension of the symmetry algebra,

\begin{equation}
 \sum_{k \in I_0} n_k^2 + \sum_{k \in I_1} ((n_k-1)^2 + 1) + 2\sum_{k \in I_S}n_k,
\end{equation}

Note that this reproves the formula derived by Bryant (p.648 in \cite{MR1824987}).
An immediate consequence of this formula is the following result, which determines the submaximal dimension of the symmetry algebra.
Recall that the maximal dimension is equal to $n^2 + 2n$.

\begin{prop}\label{prop: submax}{The submaximal dimension of the symmetry algebra occurs for a homogeneous manifold from Corollary 1, with 
$n_1 = 1.$ The dimension is equal to $n^2 + 2$.}
\end{prop}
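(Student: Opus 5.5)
The plan is to extract Proposition \ref{prop: submax} from the dimension formula
\[
D=\sum_{k\in I_0} n_k^2+\sum_{k\in I_1}\big((n_k-1)^2+1\big)+2\sum_{k\in I_S} n_k .
\]
I would show that every non-maximal configuration has $D\le n^2+2$, and that equality is attained by the homogeneous structure of Corollary \ref{cor: homogeneous structures} with $n_1=1$. Here $n_1,n_2$ denote the block sizes in that corollary, and $n_k$ ($k=1,\dots,\mu$) the multiplicities of the eigenvalues of $X$. Attainment is immediate: the corollary gives dimension $2n+n_1^2+n_2^2$, and with $n_1=1$, $n_2=n-1$ this is $2n+1+(n-1)^2=n^2+2$. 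The maximal value $n^2+2n$ occurs exactly when $X=\tau\id$, $a=0$, $\tau^2=-r$ (a single block with $\beta$ free). The whole proof is then a case analysis over the partition $n=\sum_k n_k$ and the index sets $I_0,I_1,I_S$.

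\emph{Case 1: $a=0$ and every $\beta_{V_k}$ is free.} Then $I_1=\emptyset$ and $I_S=I_0$, so we are in the locally homogeneous situation of Corollary \ref{cor: homogeneous structures}. Since every eigenvalue satisfies $\tau_k^2=-r$, there are at most two eigenspaces, with eigenvalues $\pm\tau$, and $D=2n+n_1^2+n_2^2$. If one block is empty, the structure is maximal. Otherwise $n_1,n_2\ge1$, and $n_1^2+(n-n_1)^2$ is convex in $n_1$, so it is maximised at the endpoints $n_1=1$ or $n_1=n-1$. This gives $D\le n^2+2$.

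\emph{Case 2: some block is not of type $I_S$.} Then some $k$ lies in $I_1$, or in $I_0\setminus I_S$. I bound each block's contribution by $n_k^2+2n_k$ if $k\in I_S$, by $n_k^2$ if $k\in I_0\setminus I_S$, and by $n_k^2-2n_k+2$ if $k\in I_1$. This yields
\[
D\le \sum_k n_k^2+2n-2\sum_{k\notin I_S} n_k-\sum_{k\in I_1}(2n_k-2).
\]
\begin{itemize}
\item If there is only one block ($\mu=1$), it is not in $I_S$, so $D\le n^2$ when $a=0$, and $D\le (n-1)^2+1<n^2$ when $a\neq0$.
\item If $\mu\ge2$, then $\sum n_k^2\le (n-1)^2+1=n^2-2n+2$. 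Using $\sum_{k\notin I_S}n_k\ge1$ gives $D\le n^2+2-2=n^2<n^2+2$.
\end{itemize}
So the non-homogeneous structures never reach $n^2+2$, and the submaximal value is realised only by the homogeneous product with $n_1=1$ (or, symmetrically, $n_2=1$).

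The main obstacle I expect is the bookkeeping in Case 2. Specifically, the crude bound $\sum n_k^2\le (n-1)^2+1$ must be combined correctly with the loss coming from the non-$I_S$ blocks. One must also check that the definition of $I_S$, via \eqref{suma3}, does not let blocks in $I_1$ pick up extra $2n_k$ parameters. This holds because in $I_1$ the vector $\beta_{V_k}$ is confined to $\I\lambda_k a_{V_k}$, and that single real parameter is already counted in the $+1$. I would also note that $n\ge2$ is needed, so that the two-block homogeneous case exists and genuinely differs from the maximal one.
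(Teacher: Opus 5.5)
Your proposal is correct and takes the same approach as the paper. The paper simply calls the proposition an immediate consequence of the dimension formula $\sum_{k\in I_0}n_k^2+\sum_{k\in I_1}\big((n_k-1)^2+1\big)+2\sum_{k\in I_S}n_k$. Your case analysis supplies the bookkeeping the paper leaves implicit: in the fully homogeneous case there are at most two eigenvalues $\pm\tau$ and convexity maximises at $n_1=1$, and in every other case $\sum_k n_k^2\le n^2-2n+2$ together with at least one non-$I_S$ block forces $D\le n^2$.
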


{\bf Example.}  \iffalse Consider \GS{I hoped to be able to produce an example with one $p$ unrestricted, but this may not work.}
\[X=\begin{bmatrix}
    -3&0\\0&0
\end{bmatrix}, \quad a=\begin{bmatrix}
    -\I\sqrt{2}\\0
\end{bmatrix}, \quad r=0. \]

Then
\[H=\begin{bmatrix}
    -\frac{9}{\sqrt{2}}p_1&0\\0&h_{22}
\end{bmatrix}, \quad p=\begin{bmatrix}
    p_1\\0
\end{bmatrix},\quad \alpha=\begin{bmatrix}
    -3\I p_1\\0
\end{bmatrix}, \quad \rho=-6\sqrt{2} p_1, \]
where $p_1, h_{22}$ are real, resulting in a 2-parametric family.

This corresponds to
\[\theta=\begin{bmatrix}
    3&0\\0&2
\end{bmatrix}, \quad b=\begin{bmatrix}
    \sqrt{2}\\0
\end{bmatrix}, \quad R=0. \]

Since
\begin{align*}
    X=&\begin{bmatrix}
    -3&0\\0&0
\end{bmatrix}=\theta-2bb^*-b^*b=\begin{bmatrix}
    3&0\\0&2
\end{bmatrix}-2\begin{bmatrix}
    2&0\\0&0
\end{bmatrix}-\begin{bmatrix}
    2&0\\0&2
\end{bmatrix}\\
a=&\begin{bmatrix}
    -3\I\sqrt{2}\\0
\end{bmatrix}=(\I\theta-R-2\I \langle b,b\rangle)b=\I \begin{bmatrix}
    -1&0\\0&-2
\end{bmatrix}\begin{bmatrix}
    \sqrt{2}\\0
\end{bmatrix}=\begin{bmatrix}
    -\I\sqrt{2}\\0
\end{bmatrix}\\
r=&-R^2+2b^*\theta b-3\langle b,b\rangle^2=0
\end{align*}
\fi

Consider the case
\[a=0, \quad r=0. \]
Then, according to \eqref{eqn: stanton to general normal coordinates}, the Stanton parameter $\theta$ equals the matrix parameter $X$, resulting in the Stanton equation. 

\[v=\langle \e^{-2\theta v}z,z\rangle.\]
Then the infinitesimal automorphisms have the form 
\[(\beta+\I Hz)\frac{\partial}{\partial z}\]
with $[H,\theta]=0$ and $\theta \beta=0$. 

Assume the $\theta$ is diagonal and has $n_1$ eigenvalues $0$. Then
$$v=\langle z',z'\rangle + \langle \e^{-2\theta''v}z'',z''\rangle,$$
where $z'=(z_1,\dots,z_{n_1})$ and $z''=(z_{n_1+1},\dots,z_{n})$ and $n_2,\dots,n_r$ are the multiplicities of the non-zero eigenvalues of $\theta$.  Then the dimension of the symmetry group is $2n_1+n_1^2+\cdots+n_r^2$. The automorphisms are
\begin{align*}
\zeta'=&U_1z'+\beta\\
\zeta''=&\e^{\theta''(2\langle U_1z',\beta \rangle+\langle \beta,\beta\rangle)} U_2 z''\\
\omega=& w+2\I \langle U_1z',\beta\rangle +\I\langle \beta,\beta\rangle
\end{align*}
where $U_1$ is a unitary matrix and $U_2$ is a matrix with unitary blocks of dimensions $n_2,\dots,n_r$.

\subsection{Complete classification for complex surfaces (\texorpdfstring{$n=2$}{n=2})}\label{complex surface case}

For $n=2$ we have two cases: a) $\tau=x_1=x_2$ and b) $x_1=\tau_1\neq x_2=\tau_2$. 

Consider case a). Without loss of generality $a_1\ge 0$ and $a_2=0$. If $a_1=0$ there is no restriction on $H$ and the only restriction on $\beta$ is
\[
(\tau^2+r)\beta=0.
\]
Thus, either $\tau^2=-r$ and $\beta$ is arbitrary (This is the Fubini--Study or Bergman space with 8-dimensional symmetry), or $\tau^2\neq -r$ and $\beta=0$ (4-dimensional symmetry). An example of this is the Bochner--K\"{a}hler manifold with potential $v$ that satisfies the implicit equation
$$v=\e^{-\tau v}\langle z,z\rangle$$
or the explicit equation
$$v=\frac{1}{\tau}W(\tau \langle z,z\rangle),$$
where $W$ is the Lambert $W$ function. The symmetry group is $U(2)$.
\medskip

If $a_1\neq 0$ it follows that $\beta_1$ is imaginary, $\beta_2=0$, $h_{12}=0$, and
$$h_{11}=\frac{(\tau^2+r)\beta_1}{\I a_1}.$$
This results in 2 symmetries.

In case b) we know that $\beta_j$ is purely imaginary or $a_j=0$ and
$$h_{12}=\frac{2\I}{\tau_1-\tau_2}(a_1\bar{\beta_2}+a_2\beta_1).$$

Furthermore,
\begin{align*}
h_{11}a_1+ h_{12}a_2&=\I(\tau_1^2+r)\beta_1\\
\overline{h_{12}}a_1+ h_{22}a_2&=\I(\tau_2^2+r)\beta_2.
\end{align*}

If $a_1=a_2=0$ then $h_{12}=0$. We have three options: if both $\tau_j^2\neq -r$ then 
$\beta_j=0$ (2-dimensional symmetry) and if one or both $\tau_j^2=- r$ the corresponding $\beta_j$ is arbitrary (4 or 6-dimensional symmetry). The case $\tau_1^2=-r$ corresponds to the manifold with infinitesimal symmetries (of the Heisenberg sphere)
\[(\beta+h_{11}z_1 + \I\tau_1\beta w+ 2\tau_1\bar{\beta}z_1^2)\frac{\partial}{\partial z_1}+ (h_{22}z_2+2\tau_1\bar{\beta}z_1z_2)  \frac{\partial}{\partial z_2}+ (2\I \bar{\beta}z_1+2\tau_1\bar{\beta}z_1w))\frac{\partial}{\partial w},\]
where $\beta\in \mathbb C$ and $h_{11},h_{22}\in\mathbb R$ are arbitrary.

The implicit defining equation (in normal coordinates) is
\[\frac{\sinh 2\tau_1 v}{2\tau_1}=\e^{-2\tau_1 v}|z_1|^2+ \e^{-2\tau_2 v}|z_2|^2\]
and the automorphisms have the form
\begin{align*}
    z_1'&=\frac{1}{2\sqrt{\tau_1}}\e^{\I \theta_1}\frac{2\sqrt{\tau_1}z_1+b}{1-2\sqrt{\tau_1}\bar{b}z_1}\\
    z_2'&=\e^{\I \theta_2} \frac{(1+|b|^2)^\frac{\omega}{2} z_2}{(1-2\sqrt{\tau_1}\bar{b}z_1)^\omega},
\end{align*}
where $\omega= \frac{\tau_1+\tau_2}{2\tau_1}$.

The Bochner--K\"{a}hler manifold with 6-dimensional group  is the Cartesian product of $\mathbb{CP}^1$ with Fubini--Study and a Poincar\'e--Lobachevsky disc (Bergman metric), which is homogeneous.\medskip

If both $a_1\neq 0$ and $a_2\neq 0$ then $\beta_1$ and $\beta_2$ are purely imaginary and
\begin{align*}
    h_{12}&=\frac{2\I}{\tau_1-\tau_2}( -a_1\beta_2+a_2\beta_1)\\
    h_{11}&=-h_{12}\frac{a_2}{a_1}+ \frac{\I (\tau_1^2+r)\beta_1}{a_1}\\
    h_{22}&=-\overline{h_{12}}\frac{a_1}{a_2}+ \frac{\I (\tau_2^2+r)\beta_2}{a_2}.
\end{align*}
This results in a 2-dimensional symmetry algebra.

If either $a_1=0$ or $a_2=0$, say $a_1=0$, we obtain two cases, according to Proposition 3. If 
\begin{equation} \label{ataur}
    a_2^2 + (\tau_2 - \tau_1)(\tau_1^2 + r) = 0, 
\end{equation}

then $\beta_1$ is arbitrary and $\beta_2$ is purely imaginary, $h_{11}$ is arbitrary (real) and 
\begin{align*}
    h_{12}&=\frac{2\I a_2\beta_1} {\tau_1-\tau_2}\\
    h_{22}&= \frac{\I (\tau_2^2+r)\beta_2}{a_2},
\end{align*}
resulting in a 4-dimensional symmetry.
If \eqref{ataur} does not hold, then $\beta_1 =h_{12} = 0$, and the symmetry algebra is 2-dimensional.

Beyond counting the algebra dimensions via counting unconstrained variables remaining at the end of each branch in this branching analysis, we can furthermore substitute the variable constraints into the symmetry formula in Theorem \ref{thm: infinitesimal symmetries} and determine the symmetry algebra's isomorphism class with standard Lie theory techniques. 
We carried out such analysis with the computer software \emph{dgcv} \cite{dgcv}, we report its results in Table \ref{tab: surface moduli space stratification}, which presents the complete table of symmetry algebras computed for every germ of Bochner-K\"{a}hler structures on complex surfaces, expressed in the parameters of the canonical  form of Theorem \ref{thm: general potentials}.

\begin{table}[hbt!]
    \centering
    \begin{tabular}{|c:c|c|c:c|l|}
        \hline
         \multicolumn{2}{|c|}{\shortstack[c]{strata defining conditions\\(defined up to\\ permutation of indices)}} & \shortstack[c]{symmetry\\algebra} & \multicolumn{2}{|c|}{\shortstack[c]{strata \&\\ algebra\\ dimensions}} & notes\\\hline\hline
         \multirow{3}{*}{\parbox{2.8cm}{\flushleft $a = 0$, $\tau_1=\tau_2$, $r=-\tau_1^2$}} & $\tau_1 = 0$ & $\mathbb{C}^2\rtimes\mathfrak{u}(2)$ & \parbox{.7cm}{\centering 0} & \multirow{3}{*}{8} & Euclidean \\
         \cline{2-4}\cline{6-6}
         & $\tau_1 > 0$ & $\mathfrak{su}(3)$ & 1 & & Fubini--Study\\\cline{2-4}\cline{6-6}
         & $\tau_1 < 0$ & $\mathfrak{su}(2,1)$ & $1$ & & Bergman metric\\
         \hline
         \parbox{2.8cm}{\flushleft $a=0$, $\tau_1=-\tau_2$, $r=-\tau_1^2$ }& $\tau_1\neq 0$ & $\mathfrak{su}(2)\oplus \mathfrak{su}(1,1)$ & 1 & 6 &\parbox{2.5cm}{\flushleft Tachibana--Liu\\ solutions \cite{MR0266121}}\\
         \hline  
         \multicolumn{2}{|c|}{$a=0$, $\tau_1=\tau_2$, $r\neq-\tau_1^2$} & $\mathfrak{u}(2)$ & 2 & & 4-d. isotropy\\
         \cline{1-4}\cline{6-6}
         \multirow{3}{*}{\parbox{2.8cm}{\flushleft $a=0$, $\tau_1\neq\pm\tau_2$, $r=-\tau_1^2$}} & $\tau_1=0$ & algebra \eqref{eqn: kahler surface aut solvable mat algebra} & 1 & & \multirow{3}{*}{\parbox{2.4cm}{\flushleft 2-dimensional\\ isotropy}} \\
         \cline{2-4}
         & $\tau_1 < 0$ & $\mathfrak{u}(1,1)$ & 2 & &\\
         \cline{2-4}
         & $\tau_1 > 0$ & $\mathfrak{u}(2)$ & 2 & \multirow{5}{*}{4} &\\
         \cline{1-4}\cline{6-6}
         \multirow{3}{*}{\parbox{2.8cm}{\flushleft $a_2=0$, $\tau_1=\frac{-2a_1^2 + \tau_2(r+\tau_2^2)}{r+\tau_2^2}$, $a_1\neq 0$, $\tau_1\neq \tau_2$,  $r\neq -\tau_2^2$}}& \parbox{2.2cm}{\centering $\,$ \\$\tau_2< 0$, $a_1 = \pm \frac{(r+\tau_2^2)}{2\sqrt{-\tau_2}}$ \\ $\,$ }& algebra \eqref{eqn: kahler surface aut solvable mat algebra} & 2 & & \multirow{3}{*}{\parbox{2.4cm}{\flushleft 1-dimensional\\ isotropy}}\\
         \cline{2-4}
         & \vphantom{$\Bigg($}\parbox{2.2cm}{\centering  $a_1^2 < \frac{(r+\tau_2^2)^2}{-4\tau_2}$}& $\mathfrak{u}(2)$ & 3 & & \\
         \cline{2-4}
         & \vphantom{$\Bigg($}\parbox{2.2cm}{\centering $a_1^2 > \frac{(r+\tau_2^2)^2}{-4\tau_2}$}& $\mathfrak{u}(1,1)$ & 3 & & \\
         \hline
         \multicolumn{2}{|c|}{$a_2=0$, $\tau_1=\tau_2$, $a_1\neq 0$}& \multirow{3}{*}{$\mathbb{R}^2$} & 3 & \multirow{3}{*}{2} & \multirow{3}{*}{trivial isotropy}\\
         \cline{1-2}\cline{4-4}
        \multicolumn{2}{|c|}{$a_2=0$, $r= -\tau_j^2$, $a_1\neq 0$, $\tau_1\neq\tau_2$}& & 3 & & \\
         \cline{1-2}\cline{4-4}
         \multicolumn{2}{|c|}{otherwise} & & 5 & & \\
         \hline
    \end{tabular}
    \caption{Stratification of the moduli space of germs of Bochner--K\"{a}hler complex surfaces into sets where the symmetry algebra is constant. Strata are described by conditions on the canonicalised moduli space parameters $(a,X,r)$ of Theorem \ref{thm: general potentials} with $\tau_1,\tau_2$ being eigenvalues of $X$. Permuting indices in a strata's defining conditions describes another strata with equivalent properties, which are omitted from the table for brevity.
    }
    \label{tab: surface moduli space stratification}
\end{table}

Most of the symmetry algebras in Table \ref{tab: surface moduli space stratification} have standard labels.  There is one exception, and we will label this additional algebra $\mathfrak{s}_4$; it is the solvable algebra having matrix representation for which there is a basis $(e_0, e_1,e_2, e_3)$ satisfying
\begin{equation}\label{eqn: kahler surface aut solvable mat algebra}
\sum_{j=0}^3x_je_j =
\begin{bmatrix}
0 & 0 & 0 & 0 \\
x_1 & 0 & x_0 & 0 \\
x_2 & -x_0 & 0 & 0 \\
x_3 & -x_2 & x_1 & 0 \\
\end{bmatrix}.
\end{equation}

\subsection{Complete classification for complex 3-manifolds (\texorpdfstring{$n=3$}{n=3})}\label{complex 3-manifold case}
A complete calculation of symmetry algebras on Bochner--K\"{a}hler 3-manifolds can be obtained with the same techniques demonstrated in \S\ref{complex surface case}. Tables \ref{tab: 3fold moduli space stratification} and \ref{tab: 3fold moduli space stratification low dim symmetries} present a complete stratification of the moduli space of germs of Bochner-K\"{a}hler structures on 3-manifolds such that corresponding K\"{a}hler symmetry algebra isomorphism classes are constant along the strata.

\begin{table}[hbt!]
    \centering
    \begin{tabular}{|c:c|c|c:c|l|}
        \hline
         \multicolumn{2}{|c|}{\shortstack[c]{strata defining conditions\\(defined up to\\ permutation of indices)}} & \shortstack[c]{symmetry\\algebra} & \multicolumn{2}{|c|}{\shortstack[c]{strata \&\\ algebra\\ dimensions}}& notes\\\hline\hline
         \multirow{3}{*}{\parbox{3cm}{\flushleft $a = 0$, $\tau_1=\tau_2=\tau_3$, $r=-\tau_1^2$}} & $\tau_1 = 0$ & $\mathbb{C}^3\rtimes\mathfrak{u}(3)$ & \parbox{.7cm}{\centering 0} & \multirow{3}{*}{15} & Euclidean \\
         \cline{2-4}\cline{6-6}
         & $\tau_1 > 0$ & $\mathfrak{su}(4)$ & 1 &  & Fubini--Study\\
         \cline{2-4}\cline{6-6}
         & $\tau_1 < 0$ & $\mathfrak{su}(3,1)$ & 1 &  & Bergman metric\\
         \hline
         \multirow{2}{*}{\parbox{3cm}{\flushleft \vspace{-2pt}$a = 0$, $r=-\tau_1^2$, $\tau_1=\tau_2=-\tau_3$}} & $\tau_1 > 0$ & $\mathfrak{su}(1,1)\oplus \mathfrak{su}(3)$ & 1 & \multirow{2}{*}{11} & \multirow{2}{*}{\parbox{2.5cm}{\flushleft Tachibana--Liu\\ solutions \cite{MR0266121}}} \\
         \cline{2-4}
         & $\tau_1 < 0$ & $\mathfrak{su}(2)\oplus\mathfrak{su}(2,1)$ & 1 &  & \\
         \hline
         \multicolumn{2}{|c|}{$a = 0$, $\tau_1=\tau_2=\tau_3$, $r\neq -\tau_1^2$} & $\mathfrak{u}(3)$ & 2 &  & 9-d. isotropy \\
         \cline{1-4}\cline{6-6}
         \multirow{3}{*}{\parbox{3cm}{\flushleft $a_1=a_2=0$, $\tau_1=\tau_2$, $\tau_3=\frac{-2a_3^2+\tau_1(r+\tau_1^2)}{r+\tau_1^2}$, $a_3\neq 0$, $\tau_1\neq\tau_3$, $r\neq-\tau_1^2$}} & \parbox{2.2cm}{\centering $\,$ \\$\tau_1< 0$, $a_3 = \pm \frac{(r+\tau_1^2)}{2\sqrt{-\tau_1}}$} & \parbox{3cm}{\centering $\mathfrak{s}_6\rtimes \mathfrak{su}(2)$ defined by \eqref{eqn: kahler 3fold aut solvable mat algebra} and \eqref{eqn: s-rep of u2}}& 2 & \multirow{7}{*}{9}& \multirow{3}{*}{\parbox{2.4cm}{\flushleft 4-dimensional isotropy}} \\
         \cline{2-4}
         & \vphantom{$\Bigg($}\parbox{2.2cm}{\centering  $a_3^2 < \frac{(r+\tau_1^2)^2}{-4\tau_1}$}& $\mathfrak{u}(2,1)$ & 3 & & \\
         \cline{2-4}
         & \vphantom{$\Bigg($}\parbox{2.2cm}{\centering  $a_3^2 > \frac{(r+\tau_1^2)^2}{-4\tau_1}$}& $\mathfrak{u}(3)$ & 3 & & \\
         \cline{1-4}\cline{6-6}
         \multirow{3}{*}{\parbox{3cm}{\flushleft $a=0$, $\tau_1=\tau_2$, $r=-\tau_1^2$, $\tau_1\neq\tau_3$}} & $\tau_1=0$ & \parbox{3cm}{\centering $\mathfrak{s}_6\rtimes \mathfrak{su}(2)$ defined by \eqref{eqn: kahler 3fold aut solvable mat algebra} and \eqref{eqn: s-rep of u2}} & 1 & & \multirow{3}{*}{\parbox{2.4cm}{\flushleft 5-dimensional isotropy}}\\
         \cline{2-4}
         & $\tau_1>0$ & $\mathfrak{u}(3)$ & 2 & & \\
         \cline{2-4}
         & $\tau_1<0$ & $\mathfrak{u}(2,1)$ & 2 & & \\
         \hline
         \multirow{3}{*}{\parbox{3cm}{\flushleft $a=0$, $\tau_1=\tau_2$, $r=-\tau_1^2$, $\tau_1\neq\tau_3$}} & $\tau_3=0$ & \parbox{3cm}{\centering $\mathfrak{s}_4\oplus \mathfrak{su}(2)$ defined by \eqref{eqn: kahler surface aut solvable mat algebra}} & 1 & & \multirow{3}{*}{\parbox{2.4cm}{\flushleft 5-dimensional isotropy}}\\
         \cline{2-4}
         & $\tau_3>0$ & $\mathfrak{u}(2)\oplus \mathfrak{su}(2)$ & 2 & & \\
         \cline{2-4}
         & $\tau_3<0$ & $\mathfrak{u}(2)\oplus \mathfrak{su}(1,1)$ & 2 & \multirow{5}{*}{7}& \\
         \cline{1-4}\cline{6-6}
         \multirow{2}{*}{\parbox{3.7cm}{\flushleft \vspace{-10pt}$a_2=a_3=0$, $\tau_1\neq\tau_2$, $\tau_2\neq\pm\tau_3$, $\tau_3\neq\tau_1$, $a_1^2 = \frac{-(\tau_1-\tau_2)x}{2}$, $r=x-\tau_2^2$, where $x:=(\tau_1-\tau_3)(\tau_2+\tau_3)$}} & $\tau_1<\tau_2$ &\parbox{2.3cm}{\centering $\,$ \\$\,$ \\ $\mathfrak{u}(2)\oplus \mathfrak{su}(2)$ \\ $\,$}& 3 & & \multirow{2}{*}{\parbox{2.4cm}{\flushleft 2-dimensional isotropy}}\\
         \cline{2-4}
         & $\tau_1>\tau_2$ &\parbox{3cm}{\centering $\,$ \\$\mathfrak{u}(2)\oplus \mathfrak{su}(1,1)$ \\$\,$ \\ $\,$} & 3 & & \\
         \cline{1-4}\cline{6-6}
         \multirow{2}{*}{\parbox{3.2cm}{\flushleft $a=0$, $\tau_1=-\tau_2$, $r=-\tau_1^2$, $\tau_1\neq\pm\tau_3$}} & $\tau_1 = 0$& \parbox{3cm}{\centering $\mathfrak{s}_7$ defined by \eqref{eqn: kahler 3fold aut solvable mat algebra}}& 1 & & \multirow{2}{*}{\parbox{2.4cm}{\flushleft 3-dimensional isotropy}}\\
         \cline{2-4}
         & $\tau_1 \neq 0$& $\mathfrak{u}(2)\oplus \mathfrak{su}(1,1)$ & 2 & &\\
         \hline
    \end{tabular}
    \caption{Strata in the moduli space of germs of Bochner--K\"{a}hler 3-manifolds where the symmetry algebra is constant with dimension $\geq 7$. Strata are described by conditions on the canonicalised moduli space parameters $(a,X,r)$ of Theorem \ref{thm: general potentials} with $\tau_j$ being eigenvalues of $X$. Permuting indices in a strata's defining conditions describes another strata with equivalent properties.
    }
    \label{tab: 3fold moduli space stratification}
\end{table}

\begin{table}[hbt!]
    \centering
    \begin{tabular}{|c:c|c|c:c|l|}
        \hline
         \multicolumn{2}{|c|}{\shortstack[c]{strata defining conditions\\(defined up to\\ permutation of indices)}} & \shortstack[c]{symmetry\\algebra} & \multicolumn{2}{|c|}{\shortstack[c]{strata \&\\ algebra\\ dimensions}}& notes\\\hline\hline
         \multicolumn{2}{|c|}{\parbox{6cm}{$a = 0$, $\tau_1=\tau_2\neq \tau_3$, $r\neq-\tau_1^2$, $r\neq-\tau_3^2$ }} & $\mathfrak{su}(2)\oplus \mathbb{R}^2$ & 3 & \multirow{11}{*}{5} & \parbox{1.5cm}{\flushleft 5-d. isotropy} \\
         \cline{1-4}\cline{6-6}
         \multicolumn{2}{|c|}{$a_2 = a_3 = 0$, $\tau_1=\tau_2=\tau_3$, $a_1\neq 0 $} & $\mathfrak{su}(2)\oplus \mathbb{R}^2$ & \parbox{.7cm}{\centering 2} &  &  \multirow{2}{*}{\parbox{1.5cm}{\flushleft 4-d. isotropy}}\\
         \cline{1-4}
         \multicolumn{2}{|c|}{\parbox{6cm}{$a_1 = a_2 = 0$, $\tau_1=\tau_2\neq \tau_3$, $r=-\tau_1^2$, $a_3\neq 0 $}} & $\mathfrak{su}(2)\oplus \mathbb{R}^2$ & 3 &  &  \\
         \cline{1-4}\cline{6-6}
         \multirow{3}{*}{\parbox{3.3cm}{\vspace{-10pt}\flushleft $a_1^2=2(\tau_2^2+r)(\tau_2-\tau_1)$, $a_2=a_3=0$, $\tau_1\neq \tau_2$, $\tau_2\neq \tau_3$, $\tau_3\neq \tau_1$, $r\neq \tau_1\tau_2+\tau_1\tau_3- \tau_2\tau_3-(\tau_2-\tau_3)^2$}} & \parbox{3.1cm}{\centering \vspace{4pt} $\,$ \\$\frac{{\left(4 \tau_{1} \tau_{2} - 5 \tau_{2}^{2} - r\right)}^{2}}{\left(\tau_{2}^{2} + r\right)^{2}}>0$ $\,$ \\} &$\mathfrak{su}(1,1)\oplus \mathbb{R}^2$ & 4 & & \multirow{6}{*}{\parbox{1.5cm}{\flushleft 2-d. isotropy}}\\
         \cline{2-4}
         & \parbox{3.1cm}{\centering \vspace{4pt}$\,$ \\$\frac{{\left(4 \tau_{1} \tau_{2} - 5 \tau_{2}^{2} - r\right)}^{2}}{\left(\tau_{2}^{2} + r\right)^{2}}<0$ $\,$ \\} &$\mathfrak{su}(2)\oplus \mathbb{R}^2$ & 4 & & \\
         \cline{2-4}
         & \parbox{3.1cm}{\centering \vspace{4pt}$\,$ \\$\frac{{\left(4 \tau_{1} \tau_{2} - 5 \tau_{2}^{2} - r\right)}^{2}}{\left(\tau_{2}^{2} + r\right)^{2}}=0$ $\,$ \\} &\parbox{2.4cm}{\centering $\mathfrak{s}_4\oplus \mathbb{R}$ defined by \eqref{eqn: kahler surface aut solvable mat algebra}}& 3 & & \\
         \cline{1-4}
         \multirow{3}{*}{\parbox{3.3cm}{\flushleft $a=0$, $r=-\tau_1^2$, $\tau_1\neq\tau_2$, $\tau_2\neq\tau_3$, $\tau_3\neq\tau_1$ }} & $\tau_1<0$ &\parbox{2.7cm}{\centering $\,$ \\ $\mathfrak{su}(1,1)\oplus \mathbb{R}^2$ \\ $\,$}& 3 & & \\
         \cline{2-4}
         & $\tau_1>0$ &\parbox{2.7cm}{\centering $\,$ \\ $\mathfrak{su}(2)\oplus \mathbb{R}^2$\vspace{2pt}}& 3 & & \\
         \cline{2-4}
         & $\tau_1=0$ &\parbox{2.4cm}{\centering \vspace{2pt} $\mathfrak{s}_4\oplus \mathbb{R}$ defined by \eqref{eqn: kahler surface aut solvable mat algebra}}\vspace{2pt}& 2 & & \\
         \cline{1-4}\cline{6-6}
         \multirow{3}{*}{\parbox{3.3cm}{\flushleft $a_3=0$, $\tau_1\neq\tau_2$, $\tau_2\neq\tau_3$, $\tau_3\neq\tau_1$, $a_1 \neq 0$, $rx=2a_1^2(\tau_3-\tau_2) +2a_2^2(\tau_3-\tau_1) - \tau_3^2x$, where $x:=(\tau_1-\tau_3)(\tau_2-\tau_3)$}} & $\tau_3>\frac{-a_2^2}{(\tau_2-\tau_3)^2}+\frac{-a_1^2}{(\tau_1-\tau_3)^2}$ &\parbox{2.7cm}{\centering $\,$ \\ $\mathfrak{su}(2)\oplus \mathbb{R}^2$ \\ $\,$}& 5 & & \multirow{3}{*}{\parbox{1.5cm}{\flushleft 1-d. isotropy}}\\
         \cline{2-4}
         & $\tau_3<\frac{-a_2^2}{(\tau_2-\tau_3)^2}+\frac{-a_1^2}{(\tau_1-\tau_3)^2}$ &\parbox{2.3cm}{\centering $\,$ \\ $\mathfrak{su}(1,1)\oplus \mathbb{R}^2$ \\ $\,$ \\ $\,$ }& 5 & & \\
         \cline{2-4}
         & $\tau_3=\frac{-a_2^2}{(\tau_2-\tau_3)^2}+\frac{-a_1^2}{(\tau_1-\tau_3)^2}$ &\parbox{2.4cm}{\centering $\,$ \\ $\mathfrak{s}_4\oplus \mathbb{R}$ defined by \eqref{eqn: kahler surface aut solvable mat algebra} \\ $\,$ }& 4 & & \\
         \hline
         \multicolumn{2}{|c|}{otherwise} & \parbox{1cm}{\centering \vspace{2pt}  $\mathbb{R}^3$ \vspace{2pt}} & 7 & 3 & \\
         \hline
    \end{tabular}
    \caption{Strata in the moduli space of germs of Bochner--K\"{a}hler 3-manifolds where the symmetry algebra is constant with dimension $< 7$. Completes the moduli space stratification when combined with Table \ref{tab: 3fold moduli space stratification}.
    }
    \label{tab: 3fold moduli space stratification low dim symmetries}
\end{table}

Similar to \eqref{eqn: kahler surface aut solvable mat algebra}, the classification involves two solvable algebras $\mathfrak{s}_6$ and $\mathfrak{s}_7$ with $\mathfrak{s}_6\subset \mathfrak{s}_7$, having a matrix representation for which there is a basis $(e_0,\ldots, e_5)$ of $\mathfrak{s}_6$ extending to a basis $(e_0,\ldots, e_6)$ of $\mathfrak{s}_7$ satisfying
\begin{equation}\label{eqn: kahler 3fold aut solvable mat algebra}
\sum_{j=0}^6x_je_j = 
\begin{bmatrix}
0 & 0 & 0 & 0 & 0 & 0 \\
x_1 & 0 & 0 & 0 & x_0 & 0 \\
x_2 & 0 & 0 & x_0-x_6 & 0 & 0 \\
x_3 & 0 & x_6-x_0 & 0 & 0 & 0 \\
x_4 & -x_0 & 0 & 0 & 0 & 0 \\
x_5 & -x_4 & -x_3 & x_2 & x_1 & 0 
\end{bmatrix}.
\end{equation}
We will also encounter the $\mathfrak{s}_6$-representation $\rho:\mathfrak{su}(2)\to \mathfrak{aut}(\mathfrak{s}_6)$ of the unitary algebra $\mathfrak{su}(2)$, represented by matrices in the basis $(e_0,\ldots, e_5)$ of the form
\begin{equation}\label{eqn: s-rep of u2}
\begin{bmatrix}
0 & 0 & -u_2 & u_3 & -u_1 & 0 \\
0 & 0 & u_2 & -u_3 & u_1 & 0 \\
0 & -u_2 & 0 & -u_1 & -u_3 & -u_2 \\
0 & u_3 & u_1 & 0 & -u_2 & u_3 \\
0 & -u_1 & u_3 & u_2 & 0 & -u_1 \\
0 & 0 & 0 & 0 & 0 & 0 
\end{bmatrix}.
\end{equation}
Their corresponding semidirect sum $\mathfrak{s}_6\rtimes \mathfrak{su}(2)$ appears among the symmetry algebras in Table \ref{tab: 3fold moduli space stratification}.

We omit full details of deriving Tables \ref{tab: 3fold moduli space stratification} and \ref{tab: 3fold moduli space stratification low dim symmetries} as the calculations are prohibitively long, while offering little illumination beyond what is already demonstrated in \S\ref{complex surface case}. We will, however, present the special case covered by rows 7 through 9 in Table \ref{tab: 3fold moduli space stratification low dim symmetries}, as this is a good demonstration of the basic techniques involved. 

This special case consists of the Bochner--K\"{a}hler structures whose $(a,X,r)$ parameters in the normal form of Theorem \ref{thm: general potentials} satisfy
\begin{equation}\label{eqn: demonstration strata}
a=0,
\quad X = \begin{bmatrix} \tau_1 &0 &0\\ 0&\tau_2 &0\\0 &0&\tau_3 \end{bmatrix},
\quad r=-\tau_1^2,
\quad \tau_1\neq\tau_2,
\quad \tau_2\neq\tau_3,
\quad\text{and}\quad \tau_3\neq\tau_1.
\end{equation}
The algebra of infinitesimal symmetries described in Theorem \ref{thm: infinitesimal symmetries}, represented by matrices in the form \eqref{indef su rep symmetry}, is spanned by
\[
e_1:=\begin{bmatrix} 
0 & -2\tau_1 & 0 & 0 & 0 \\ 
1 & 0 & 0 & 0 & \I\tau_1 \\
0 & 0 & 0 & 0 & 0 \\
0 & 0 & 0 & 0 & 0 \\
0 & 2\I & 0 & 0 & 0 \\
\end{bmatrix},
\quad
e_2:=\begin{bmatrix} 
0 & 0 & 0 & 0 & 0 \\ 
0 & 0 & 0 & 0 & 0 \\
0 & 0 & 0 & 0 & 0 \\
0 & 0 & 0 & \I & 0 \\
0 & 0 & 0 & 0 & 0 \\
\end{bmatrix},
\quad
e_3:=\begin{bmatrix} 
0 & 2\I\tau_1 & 0 & 0 & 0 \\ 
\I & 0 & 0 & 0 & -\tau_1 \\
0 & 0 & 0 & 0 & 0 \\
0 & 0 & 0 & 0 & 0 \\
0 & 2 & 0 & 0 & 0 \\
\end{bmatrix},
\]
\[
e_4:=\begin{bmatrix} 
0 & 0 & 0 & 0 & 0 \\
0 & \I & 0 & 0 & 0 \\
0 & 0 & 0 & 0 & 0 \\
0 & 0 & 0 & 0 & 0 \\
0 & 0 & 0 & 0 & 0 \\
\end{bmatrix},
\quad\text{and}\quad
e_5:=\begin{bmatrix} 
0 & 0 & 0 & 0 & 0 \\
0 & 0 & 0 & 0 & 0 \\
0 & 0 & \I & 0 & 0 \\
0 & 0 & 0 & 0 & 0 \\
0 & 0 & 0 & 0 & 0 \\
\end{bmatrix}
\]
modulo the identity matrix and the Reeb field, which is represented by \eqref{eqn: potential parameters}. 

The algebra's Killing form represented with respect to this basis is
\[
\begin{bmatrix}-32 \tau_{1} & 0 & 0 & 0 & 0 \\ 0 & 0 & 0 & 0 & 0 \\ 0 & 0 & -32 \tau_{1} & 0 & 0 \\ 0 & 0 & 0 & -2 & 0 \\ 0 & 0 & 0 & 0 & 0\end{bmatrix},
\]
which reveals that we have at least $3$ isomorphism classes among these algebras prameterised by $(\tau_1,\tau_2,\tau_3)$, as the Killing form's inertia is an invariant. Hence we must consider $\tau_1<0$, $\tau_1=0$, and $\tau_1>0$ separately. 

If $\tau_1\neq 0$, the subspace $\mathfrak{a}:=\langle e_1,e_3,4\tau_1 e_4+(\tau_1 + \tau_3)e_2+(\tau_1+\tau_2)e_5\rangle$ is closed under Lie brackets and its complement $\langle e_2, e_5\rangle$ is the algebra's center. The killing form of $\mathfrak{a}$ is either negative definite or mixed signature depending on whether $\tau_1>0$ or $\tau_1<0$. Hence $\mathfrak{a}$ is $\mathfrak{su}(2)$ or $\mathfrak{su}(1,1)$ when $\tau_1\neq0$ depending on its sign. When $\tau_1=0$, $e_5$ is in the algebra's center, whereas the $\langle 4\tau_3e_2+4\tau_2e_5, e_1, e_3, e_4\rangle$ is isomorphic to the matrix algebra in \eqref{eqn: kahler surface aut solvable mat algebra}, visible immediately because these two algebras have identical structure equations in the respective bases that we have presented them. Thus, there are exactly three isomorphism classes of symmetry algebras in the moduli space stratum \eqref{eqn: demonstration strata}.

\section{Acknowledgements}
The authors wish to thank Duong Ngoc Son for inspiring discussions, as well as Owen Dearricott for useful discussions and for drawing our attention to related results by Pan\'ak and Schwachh\"{o}fer.
The third author was supported by the Institute for Basic Science (IBS-R032-D1).

\end{document}